\numberwithin{equation}{section}
\newtheorem{corollary}{Corollary}[section]
\newtheorem{lemma}{Lemma}[section]
\newtheorem{theorem}{Theorem}[section]
\theoremstyle{definition}
\newtheorem{remark}{Remark}[section]
\providecommand{\customgenericname}{}
\newcommand{\newcustomtheorem}[2]{%
	\newenvironment{#1}[1]
	{%
		\renewcommand\customgenericname{#2}%
		\renewcommand\theinnercustomgeneric{##1}%
		\innercustomgeneric
	}
	{\endinnercustomgeneric}
}
\DeclareMathOperator{\D}{\mathbb{D}}
\DeclareMathOperator{\area}{\mathrm{area}}
\begin{document}
\title[Conformal mappings in weighted Bergman spaces]{Geometric characterizations for conformal mappings in weighted Bergman spaces}

\author{Christina Karafyllia}  
\address{Institute for Mathematical Sciences, Stony Brook University, Stony Brook, NY 11794, U.S.A.}
\email{christina.karafyllia@stonybrook.edu}  

\author{Nikolaos Karamanlis}
\address{Department of Mathematics and Applied Mathematics, University of Crete, Heraklion 70013, Crete, Greece}
\email{karamanlisn@uoc.gr} 
\thanks{The second author is supported by the Hellenic Foundation for Research and Innovation, Project HFRI-FM17-1733.}

\subjclass[2010]{Primary 30H20, 30H10; Secondary 42B30, 30C85, 30C35}

\keywords{Weighted Bergman spaces, Hardy spaces, Hardy number, harmonic measure, hyperbolic distance, conformal mapping}


\begin{abstract} We prove that a conformal mapping defined on the unit disk belongs to a weighted Bergman space if and only if certain integrals involving the harmonic measure converge. With the aid of this theorem, we give a geometric characterization of conformal mappings in Hardy or weighted Bergman spaces by studying Euclidean areas. Applying these results, we prove several consequences for such mappings that extend known results for Hardy spaces to weighted Bergman spaces. Moreover, we  introduce a number which is the analogue of the Hardy number for weighted Bergman spaces. We derive various expressions for this number and hence we establish new results for the Hardy number and the relation between Hardy and weighted Bergman spaces.
\end{abstract}

\maketitle

\section{Introduction}\label{intro}

The weighted Bergman space with exponent $p>0$ and weight $\alpha>-1$ is denoted by $A_\alpha ^p (\D)$ and is defined to be the set of all holomorphic functions $f$ on the unit disk $\mathbb{D}$ such that
\[\left\| f \right\|_{A_\alpha ^p (\D)}^p : = \int_\mathbb{D} {{{\left| {f\left( z \right)} \right|}^p}{{\left( {1 - {{\left| z \right|}^2}} \right)}^\alpha }dA\left( z \right)}  < \infty,
\] 
where $dA$ denotes the Lebesgue area measure on $\mathbb{D}$. The unweighted Bergman space ($\alpha=0$) is simply denoted by $A^p (\D)$ and it is known as the Bergman space with exponent $p$. Weighted Bergman spaces are related to the classical Hardy spaces. The Hardy space with exponent $p>0$ is denoted by $H^p (\D)$ and is the set of all holomorphic functions $f$ on $\D$ such that
\[\left\| f \right\|_{H^p (\D)}^p: = \sup_{0<r<1}\int_{0}^{2\pi} {{{\left| {f\left( re^{it} \right)} \right|}^p} dt}  <  \infty.\]
The Hardy space $H^p (\D)$ is identified with the limit space of $A_\alpha ^p (\D)$,  as $\alpha\to -1^+$, in the sense that $\lim_{\alpha\to -1^+}\left\| f \right\|_{A_\alpha ^p (\D)}= \left\| f \right\|_{H ^p(\D)}$. Moreover, $H^p (\D)\subset A_{\alpha}^p (\D)$, for all $\alpha>-1$ and $p>0$ (see \cite{Zhu}). More on the theory of Bergman spaces can be found in \cite{HKZ} and \cite{DS}.

A classical problem in geometric function theory is to characterize conformal mappings which are contained in such spaces. See, for example, \cite{Ba2}, \cite{Kar1}, \cite{Per}, \cite{Cor}, \cite{Cor2}. In the first part of this paper, we characterize conformal mappings which are contained in $A_\alpha^p(\D)$ or $H^p(\D)$ by using conditions involving certain harmonic measures and Euclidean areas.

First, we fix some notation. For a domain $\Omega$ in the plane, a point $z \in \Omega$ and a Borel subset $A$ of $\overline \Omega$, let ${\omega _{\Omega}}\left( {z,A} \right)$ denote the harmonic measure at $z$ of $A$ with respect to the component of $\Omega \backslash A$ containing $z$. The function ${\omega _{\Omega}}\left( { \cdot ,A} \right)$ is the solution of the generalized Dirichlet problem with boundary data $\varphi  = {1_A}$. For the general theory of harmonic measure, see \cite{Gar}.

Henceforth, let $f$ be a conformal mapping on $\D$. For $r>0$, set $F_r=\{z\in\D:\ |f(z)|=r\}$. Note that $f(F_r)=f(\D)\cap\{|z|=r\}$ is the union of countably many open arcs in $f(\D)$. It follows (see \cite[Prop. 2.14]{Pom2}) that $F_r$ is the union of countably many analytic open arcs in $\D$ so that each such arc has two distinct endpoints on $\partial\D$. Moreover, it is well known (see, for example, \cite{Pom2}) that $f$ has nontangential boundary values $f(e^{it})$ for a.e. real $t$ and thus, for $r>0$, we can define the set $E_r=\{\zeta\in \partial \D:|f(\zeta)|> r\}$.  See Figure \ref{fig}. In \cite{Cor} Poggi-Corradini gave necessary and sufficient conditions for $f$ to belong to $H^p (\D)$ by studying the harmonic measures $\omega_{\D}(0,F_r)$ and $\omega_{\D}(0,E_r)$. He actually proved (see also \cite{Ess2}) that, for $p>0$, $f\in H^p (\D)$ if and only if 
\begin{equation}\label{poco1}
\int_{0}^{\infty} r^{p-1}\omega_{\D} (0,F_r)dr<\infty
\end{equation}
or if and only if
\begin{equation}\label{poco2}
\int_{0}^{\infty} r^{p-1}\omega_{\D} (0,E_r)dr<\infty.
\end{equation}

Furthermore, he observed that the Beurling-Nevanlinna projection theorem implies that for every $r>0$,
\begin{equation}\label{bn}
\omega_{\D}(0,F_r)\ge \frac{2}{\pi}e^{-d_{\D}(0,F_r)},
\end{equation}
where $d_{\D}(0, F_r)$ denotes the hyperbolic distance in $\D$ between $0$ and the set $F_r$, i.e., $d_{\D}(0, F_r)= \inf_{z\in F_r} d_{\D}(0,z)$. Here, $d_{\D}(0,z)$ is the hyperbolic distance between $0$ and $z$ in $\D$. This observation led him to the question whether $\omega_{\D}(0,F_r)$ and $e^{-d_{\D}(0,F_r)}$ are comparable. Moreover, he posed the question whether we could obtain a condition similar to the condition (\ref{poco1}) by replacing the harmonic measure  $\omega_{\D}(0,F_r)$ with the quantity $e^{-d_{\D}(0,F_r)}$. More precisely, he asked whether $f\in H^p(\D)$ if and only if 
\begin{equation}\label{kaha}
\int_{0}^{\infty} r^{p-1}e^{-d_{\D}(0,F_r)}dr<\infty.
\end{equation}
Obviously, if $\omega_{\D}(0,F_r)$ and $e^{-d_{\D}(0,F_r)}$ were comparable, the answer would be positive trivially. However, the first author proved in \cite{Kar2} that these quantities are not comparable in general. So, it was not clear whether the equivalence above is true or not. By applying different methods, the first author showed in \cite{Kar1} that $f\in H^p(\D)$ if and only if \eqref{kaha} is true.

Later, in \cite{Kar} Betsakos and the current authors generalized this condition to weighted Bergman spaces. More specifically, they proved the following theorem. Note that, henceforth, we use the convention that $A^{p}_{-1}(\D)=H^p(\D)$, for every $p>0$, because all the results we state below for weighted Bergman spaces also hold for Hardy spaces.

\begin{customthm}{A}\label{bkk}
\textit{Let $p>0$ and $\alpha\ge-1$. Suppose $f$ is a conformal mapping on $\D$ and, for $r>0$, let $F_r=\{z\in\D:\ |f(z)|=r\}$. Then $f\in A_{\alpha}^p (\D)$ if and only if
\[\int_{0}^{\infty} r^{p-1}e^{-(\alpha+2)d_{\D}\left(0, F_r\right)}dr<\infty.\]
}
\end{customthm} 

Note that the case $\alpha=-1$ is (\ref{kaha}). Therefore, the remaining question is whether we can also extend the conditions (\ref{poco1}) and (\ref{poco2}) to weighted Bergman spaces. In the next section, we prove Theorem \ref{main} which shows that the answer is positive and thus we obtain necessary and sufficient conditions for conformal mappings of $\D$ to belong to $ A_{\alpha}^p (\D)$ by studying the harmonic measure. 

\begin{theorem}\label{main}
Let $f$ be a conformal mapping on $\D$. For $r>0$, we set $F_r=\{z\in\D:\ |f(z)|=r\}$ and $E_r=\{\zeta\in \partial \D: |f(\zeta)|>r\}$. If $p>0$ and $\alpha \ge -1$, the following statements are equivalent.
\begin{enumerate}
\item $f\in A_{\alpha}^p (\D)$,
\item $\displaystyle \int_{0}^{\infty} r^{p-1}\omega_{\D} (0,F_r)^{\alpha+2}dr<\infty$,
\item $\displaystyle \int_{0}^{\infty} r^{p-1}\omega_{\D} (0,E_r)^{\alpha+2}dr<\infty$.
\end{enumerate}

\end{theorem}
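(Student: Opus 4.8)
The plan is to reduce statement (1) to a statement about weighted areas and then to compare these areas against the two harmonic measures. Writing $|f(z)|^{p}=p\int_{0}^{\infty}r^{p-1}1_{\{|f(z)|>r\}}\,dr$ and applying Fubini's theorem gives the exact identity
\[
\|f\|_{A_\alpha^p(\D)}^{p}=p\int_{0}^{\infty}r^{p-1}W_\alpha(r)\,dr,\qquad W_\alpha(r):=\int_{\{z\in\D:\,|f(z)|>r\}}(1-|z|^{2})^{\alpha}\,dA(z),
\]
so that (1) is equivalent to convergence of $\int_{0}^{\infty}r^{p-1}W_\alpha(r)\,dr$. Two elementary observations then drive the first implications. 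Since $E_r\subset\partial\D$, its harmonic measure is normalized arclength, $\omega_\D(0,E_r)=|E_r|/2\pi$. Moreover, for $r>|f(0)|$ (the only range relevant to convergence) every path from $0$ to $E_r$ must first meet the level set $F_r$, so by the strong Markov property $\omega_\D(0,E_r)\le\omega_\D(0,F_r)$; raising to the power $\alpha+2\ge1$ and integrating against $r^{p-1}$ shows that (3) follows from (2).

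For $(2)\Rightarrow(1)$ I would use the Beurling--Nevanlinna bound \eqref{bn}: raised to the power $\alpha+2$ it gives $\omega_\D(0,F_r)^{\alpha+2}\ge(2/\pi)^{\alpha+2}e^{-(\alpha+2)d_\D(0,F_r)}$, so convergence of the integral in (2) forces convergence of the integral in Theorem~\ref{bkk}, which is exactly (1). To obtain $(3)\Rightarrow(1)$ I would instead prove a Carleson-type bound via the Koebe distortion theorem: decomposing $E_r$ into its boundary arcs $I_j$ and controlling, for each component of $\{|f|>r\}$, the depth to which it penetrates $\D$, one expects an estimate of the form $W_\alpha(r)\lesssim\sum_j|I_j|^{\alpha+2}\le\big(\sum_j|I_j|\big)^{\alpha+2}\asymp\omega_\D(0,E_r)^{\alpha+2}$, where the middle step is the superadditivity of $t\mapsto t^{\alpha+2}$, valid because $\alpha+2\ge1$; together with the layer-cake identity this closes $(3)\Rightarrow(1)$.

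The main obstacle is the reverse passage, namely recovering (2) or (3) from (1). Here no pointwise comparison is available: the introduction already notes, following \cite{Kar2}, that $\omega_\D(0,F_r)$ and $e^{-d_\D(0,F_r)}$ are not comparable, and likewise $\omega_\D(0,E_r)^{\alpha+2}$ can be far larger than $W_\alpha(r)$ (for instance when $\{|f|>r\}$ splits into many thin pieces), so an upper bound for $\int_0^\infty r^{p-1}\omega_\D(0,F_r)^{\alpha+2}\,dr$ in terms of the norm cannot be obtained level set by level set. The plan is therefore to argue after integrating in $r$, in the spirit of the passage from \eqref{bn} to \eqref{kaha} carried out in \cite{Kar1}: using the monotonicity (nesting) of the families $\{F_r\}$ and $\{E_r\}$ together with Koebe distortion, I would compare $\int_{0}^{\infty}r^{p-1}\omega_\D(0,F_r)^{\alpha+2}\,dr$ with the integral of Theorem~\ref{bkk}, and compare the $E_r$- and $F_r$-integrals with each other by the same mechanism that yields the equivalence of \eqref{poco1} and \eqref{poco2} in the Hardy case. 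Making this integrated comparison quantitative, so that the superadditivity loss in the many-component case is absorbed by the averaging in $r$, is the technical heart of the argument and the step I expect to be hardest.
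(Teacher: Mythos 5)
Your proposal establishes (2)$\Rightarrow$(1) exactly as the paper does (Beurling--Nevanlinna \eqref{bn} plus Theorem~\ref{bkk}) and (2)$\Rightarrow$(3) by the same pointwise inequality $\omega_\D(0,E_r)\le\omega_\D(0,F_r)$ the paper uses, but it does not prove the theorem: no implication starting from (1) is ever established. You correctly identify that recovering (2) or (3) from membership in $A_\alpha^p(\D)$ is the crux, but what you offer there is a plan, not an argument --- ``absorb the superadditivity loss by averaging in $r$'' is a restatement of the difficulty, and the non-comparability results you yourself cite indicate that no integrated comparison between $\omega_\D(0,F_r)$ and $e^{-d_\D(0,F_r)}$ of the kind you describe is available. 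The paper closes this direction by entirely different means: Smith's characterization of $A_\alpha^p(\D)$ membership as finiteness of $\int_\D|f|^{p-2}|f'|^2(\log\frac{1}{|z|})^{\alpha+2}dA$, a change of variables using conformal invariance of the Green function, Jensen's inequality (this is where $\alpha+2\ge 1$ enters), and Baernstein's identity \eqref{baernid}, which converts circular means of $g_D(f(0),\cdot)$ into integrals of $\omega_\D(0,E_t)$; monotonicity of $t\mapsto\omega_\D(0,E_t)$ then yields (3). Without some such mechanism your three implications (2)$\Rightarrow$(1), (2)$\Rightarrow$(3), (3)$\Rightarrow$(1) cannot be closed into an equivalence.

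Two further problems. First, your (3)$\Rightarrow$(1) step hinges on the unproven ``Carleson-type'' bound $W_\alpha(r)\lesssim\sum_j|I_j|^{\alpha+2}$, asserted component by component. This is a genuine theorem, not a consequence of Koebe distortion sketching: at the level of totals (and with a harmless radius shift) it amounts to the Marshall--Smith inequality \eqref{marsmith} combined with \eqref{bn} and Poggi-Corradini's estimate \eqref{hmest}, which is precisely how the paper obtains the corresponding area bounds in Theorem~\ref{geom}; and for $-1<\alpha<0$ the weight $(1-|z|^2)^\alpha$ blows up at $\partial\D$, so the component-wise estimate requires real control of the shape of each component near its boundary arc, which you do not supply. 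Second, your layer-cake identity $\|f\|_{A_\alpha^p(\D)}^p=p\int_0^\infty r^{p-1}W_\alpha(r)\,dr$ is only meaningful for $\alpha>-1$, whereas the statement (via the paper's convention $A_{-1}^p(\D)=H^p(\D)$) includes $\alpha=-1$; your framework silently drops that case, which must instead be handled as in \eqref{poco1} and \eqref{poco2}.
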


Note that the case $\alpha=-1$ is (\ref{poco1}) and (\ref{poco2}). An immediate consequence of Theorem \ref{main} is the following corollary. It is well known (see \cite[Theorem 3.16]{Dur}) for Hardy spaces that if $f$ is conformal on $\D$, then $f\in H^p(\D)$ for all $p\in (0,\frac{1}{2})$. Using Theorem \ref{main}, we can extend this result to weighted Bergman spaces.
\begin{corollary}\label{corol}
If $f$ is a conformal mapping on $\D$, then $f\in A_\alpha^p(\D)$ for every $p>0$ and $\alpha>-1$ such that $\frac{p}{\alpha+2}\in (0,\frac{1}{2})$. Moreover, the inequality $\frac{p}{\alpha+2}<\frac{1}{2}$ is sharp.
\end{corollary}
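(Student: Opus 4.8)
The plan is to read membership in $A_\alpha^p(\D)$ directly off condition (3) of Theorem \ref{main}; that is, I would prove
\[\int_0^\infty r^{p-1}\omega_{\D}(0,E_r)^{\alpha+2}\,dr<\infty\qquad\text{whenever } \tfrac{p}{\alpha+2}<\tfrac12.\]
The key input is the quoted classical fact that every conformal $f$ lies in $H^q(\D)$ for all $q\in(0,\tfrac12)$. Feeding this into the case $\alpha=-1$ of Theorem \ref{main} (equivalently, condition (\ref{poco2})) gives $\int_0^\infty r^{q-1}\omega_{\D}(0,E_r)\,dr<\infty$ for every $q\in(0,\tfrac12)$. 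The point of the argument is to convert this one-parameter family of integral bounds into a usable pointwise decay rate for $\omega_{\D}(0,E_r)$.

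To do this I would use monotonicity. Since $E_r$ shrinks as $r$ grows, the map $r\mapsto\omega_{\D}(0,E_r)$ is non-increasing, so for a fixed $q\in(0,\tfrac12)$ the bound $\omega_{\D}(0,E_s)\ge\omega_{\D}(0,E_r)$ on $[r/2,r]$ together with convergence of $\int_0^\infty s^{q-1}\omega_{\D}(0,E_s)\,ds$ yields $\omega_{\D}(0,E_r)\lesssim r^{-q}$ for all $r>0$. I would then split the target integral at $r=1$: on $(0,1)$ the trivial bound $\omega_{\D}(0,E_r)\le1$ gives $\int_0^1 r^{p-1}\,dr<\infty$, while on $(1,\infty)$ the estimate $\omega_{\D}(0,E_r)^{\alpha+2}\lesssim r^{-q(\alpha+2)}$ produces a convergent integral exactly when $p<q(\alpha+2)$. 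Because $\tfrac{p}{\alpha+2}<\tfrac12$, I can choose $q\in(\tfrac{p}{\alpha+2},\tfrac12)$, for which $q(\alpha+2)>p$, and conclude $f\in A_\alpha^p(\D)$.

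For sharpness I would exhibit a single conformal map failing membership at the endpoint $\tfrac{p}{\alpha+2}=\tfrac12$, namely the Koebe function $k(z)=z/(1-z)^2$. On the boundary $|k(e^{it})|=\tfrac{1}{4\sin^2(t/2)}$, so $E_r$ is the arc $\{t:\sin^2(t/2)<\tfrac1{4r}\}$ and hence $\omega_{\D}(0,E_r)=\tfrac2\pi\arcsin\tfrac1{2\sqrt r}\asymp r^{-1/2}$ (with the one-sided bound $\omega_{\D}(0,E_r)\ge\tfrac1\pi r^{-1/2}$ sufficing). Taking $p=\tfrac{\alpha+2}2$ and inserting this into condition (3) of Theorem \ref{main} makes the integrand $\asymp r^{p-1}r^{-(\alpha+2)/2}=r^{-1}$ near infinity, so the integral diverges and $k\notin A_\alpha^p(\D)$. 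This shows $\tfrac{p}{\alpha+2}<\tfrac12$ cannot be weakened to $\le\tfrac12$.

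I expect the only genuine work to be the pointwise decay estimate in the second step, i.e.\ passing from the family of integral bounds (one for each $q<\tfrac12$) to a concrete pointwise rate on $\omega_{\D}(0,E_r)$ via monotonicity; the endpoint computation for $k$ is explicit and the splitting of the integral is routine.
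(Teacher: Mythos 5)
Your proof is correct, but it takes a genuinely different route from the paper's. The paper treats bounded conformal maps trivially and, for unbounded $f$, quotes Poggi-Corradini's pointwise estimate $\omega_{\D}(0,F_r)\le C r^{-1/2}$ for $r>|f(0)|$ (Lemma 3.3(iii) of \cite{Cor2}), feeds it directly into condition (2) of Theorem \ref{main}, and is done. You instead bootstrap from Prawitz's classical theorem ($f\in H^q(\D)$ for all $q<\tfrac12$) through \eqref{poco2}, and convert that one-parameter family of integral bounds into the pointwise decay $\omega_{\D}(0,E_r)\le C_q r^{-q}$ by monotonicity of $r\mapsto\omega_{\D}(0,E_r)$ --- the same averaging device the paper itself uses later in the proof of Corollary \ref{l2} --- before feeding the result into condition (3). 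Your decay rate is weaker ($r^{-q}$ for each $q<\tfrac12$, rather than the endpoint rate $r^{-1/2}$), but the strict inequality $\frac{p}{\alpha+2}<\tfrac12$ leaves exactly the room needed to choose $q\in(\frac{p}{\alpha+2},\tfrac12)$, so the argument closes; what you gain is that no case split and no citation beyond what the introduction already quotes is needed, which matches the paper's framing of the corollary as an extension of the classical Hardy-space fact. For sharpness both arguments use the Koebe function; the paper computes $\omega_{\D}(0,F_r)$ via conformal invariance (an arctangent formula), whereas you compute $\omega_{\D}(0,E_r)$ directly from the boundary values $|K(e^{it})|=\frac{1}{4\sin^2(t/2)}$, getting $\omega_{\D}(0,E_r)=\frac{2}{\pi}\arcsin\frac{1}{2\sqrt{r}}\ge \frac{1}{\pi\sqrt r}$; this is more elementary, and plugged into condition (3) it yields the same divergence at $\frac{p}{\alpha+2}=\tfrac12$ that the paper obtains from condition (2).
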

The case $\alpha=0$ of Corollary \ref{corol} appears in \cite[p. 850]{Ba2}. Combining Theorems \ref{bkk} and \ref{main} we can prove one more characterization of conformal mappings in weighted Bergman spaces (or Hardy spaces) involving this time both the harmonic measure and the hyperbolic distance. 

\begin{corollary}\label{corb}
Let $p>0$ and $\alpha\ge-1$. Suppose $f$ is a conformal mapping on $\D$ and, for $r>0$, set $F_r=\{z\in\D:\ |f(z)|=r\}$. The following statements hold.
\begin{enumerate}
\item If 
\begin{equation}
\int_{0}^{\infty}r^{p-1}e^{-\beta d_{\D}(0,F_r)}\omega_{\D}(0,F_r)^{\gamma}dr<\infty \nonumber
\end{equation}
for some $\beta,\gamma \geq 0$ with $\beta+\gamma=\alpha+2$, then $f\in A_{\alpha}^p(\D)$.
\item If $f\in A_{\alpha}^p(\D)$, then 
\begin{equation}
\int_{0}^{\infty}r^{p-1}e^{-\beta d_{\D}(0,F_r)}\omega_{\D}(0,F_r)^{\gamma}dr<\infty \nonumber
\end{equation}
for any $\beta,\gamma \geq 0$ with $\beta+\gamma=\alpha+2$.
\end{enumerate}
\end{corollary}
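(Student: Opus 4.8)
The plan is to deduce both parts directly from the Beurling--Nevanlinna inequality (\ref{bn}), played against the two characterizations already in hand, Theorem \ref{bkk} and Theorem \ref{main}. The only extra ingredient I need is the trivial reformulation of (\ref{bn}),
\[
e^{-d_{\D}(0,F_r)}\le \frac{\pi}{2}\,\omega_{\D}(0,F_r),\qquad r>0,
\]
valid for every $r$, which lets me replace $e^{-d}$ by $\omega$ in one direction only. The conceptual point I would stress is that the reverse comparison (which fails in general by \cite{Kar2}) is never needed: each part uses exactly the one available direction, matched with whichever of the two characterizations makes the estimate go the right way.

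For part (1) I would fix $\beta,\gamma\ge 0$ with $\beta+\gamma=\alpha+2$ and factor the exponent as $e^{-(\alpha+2)d_{\D}(0,F_r)}=e^{-\beta d_{\D}(0,F_r)}e^{-\gamma d_{\D}(0,F_r)}$. Since $\gamma\ge 0$, raising the reformulated inequality to the power $\gamma$ gives $e^{-\gamma d_{\D}(0,F_r)}\le (\pi/2)^{\gamma}\omega_{\D}(0,F_r)^{\gamma}$, so that
\[
e^{-(\alpha+2)d_{\D}(0,F_r)}\le \Big(\tfrac{\pi}{2}\Big)^{\gamma} e^{-\beta d_{\D}(0,F_r)}\,\omega_{\D}(0,F_r)^{\gamma}.
\]
Multiplying by $r^{p-1}$ and integrating, the assumed finiteness of the mixed integral forces $\int_{0}^{\infty}r^{p-1}e^{-(\alpha+2)d_{\D}(0,F_r)}\,dr<\infty$, and Theorem \ref{bkk} then yields $f\in A_{\alpha}^p(\D)$.

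For part (2) I would apply the same inequality, but to the factor $e^{-\beta d_{\D}(0,F_r)}$: since $\beta\ge 0$ one has $e^{-\beta d_{\D}(0,F_r)}\le (\pi/2)^{\beta}\omega_{\D}(0,F_r)^{\beta}$, whence
\[
e^{-\beta d_{\D}(0,F_r)}\,\omega_{\D}(0,F_r)^{\gamma}\le \Big(\tfrac{\pi}{2}\Big)^{\beta}\omega_{\D}(0,F_r)^{\beta+\gamma}=\Big(\tfrac{\pi}{2}\Big)^{\beta}\omega_{\D}(0,F_r)^{\alpha+2}.
\]
Multiplying by $r^{p-1}$ and integrating, the hypothesis $f\in A_{\alpha}^p(\D)$ together with Theorem \ref{main} gives $\int_{0}^{\infty}r^{p-1}\omega_{\D}(0,F_r)^{\alpha+2}\,dr<\infty$, and hence the mixed integral is finite for every admissible pair $\beta,\gamma$.

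I expect no serious obstacle here: all the analytic content is already carried by Theorems \ref{bkk} and \ref{main}, and the corollary reduces to the observation that the mixed integrand is squeezed, up to the harmless constants $(\pi/2)^{\gamma}$ and $(\pi/2)^{\beta}$, between $r^{p-1}e^{-(\alpha+2)d_{\D}(0,F_r)}$ and $r^{p-1}\omega_{\D}(0,F_r)^{\alpha+2}$, whose integrals are \emph{both} equivalent to membership in $A_{\alpha}^p(\D)$ even though the two integrands are not pointwise comparable. The only points to verify are that (\ref{bn}) applies for all relevant $r$ and that the exponent splitting is legitimate for $\beta,\gamma\ge 0$, both of which are immediate.
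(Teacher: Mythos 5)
Your proof is correct. Part (1) coincides with the paper's argument: both use the Beurling--Nevanlinna bound \eqref{bn} raised to the power $\gamma$, followed by Theorem \ref{bkk}. Part (2), however, takes a genuinely different and simpler route. The paper proves part (2) via H\"older's inequality: for $\beta,\gamma>0$ it sets $t=\frac{\alpha+2}{\beta}$, $s=\frac{\alpha+2}{\gamma}$ (so $\frac{1}{t}+\frac{1}{s}=1$), splits $r^{p-1}=r^{(p-1)/t}\,r^{(p-1)/s}$, and bounds the mixed integral by
\[
\left(\int_{0}^{\infty}r^{p-1}e^{-(\alpha+2)d_{\D}(0,F_r)}dr\right)^{1/t}\left(\int_{0}^{\infty}r^{p-1}\omega_{\D}(0,F_r)^{\alpha+2}dr\right)^{1/s},
\]
which requires \emph{both} Theorem \ref{bkk} and Theorem \ref{main}, and forces the degenerate pairs $(\beta,\gamma)=(0,\alpha+2)$ and $(\alpha+2,0)$ to be handled separately, since the exponents $t,s$ are then infinite. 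You instead apply \eqref{bn} pointwise to the exponential factor, $e^{-\beta d_{\D}(0,F_r)}\le(\pi/2)^{\beta}\omega_{\D}(0,F_r)^{\beta}$, so the mixed integrand is dominated by $(\pi/2)^{\beta}\omega_{\D}(0,F_r)^{\alpha+2}$ and only Theorem \ref{main} is needed; this treats all $\beta,\gamma\ge 0$ uniformly, with no case distinction and no H\"older. What the paper's route buys is a quantitative interpolation-type estimate (the mixed integral is controlled by a geometric mean of the two extremal integrals); what yours buys is brevity and the clean structural observation that the mixed integrand is squeezed, up to constants, between $e^{-(\alpha+2)d_{\D}(0,F_r)}$ and $\omega_{\D}(0,F_r)^{\alpha+2}$, each of which characterizes membership in $A_{\alpha}^p(\D)$ even though the two quantities are not pointwise comparable by \cite{Kar2}. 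For the qualitative statement of the corollary, your argument is entirely sufficient.
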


With the aid of Corollary \ref{corb}, we establish a Euclidean geometric condition for conformal mappings in $A_\alpha^p(\D)$ (or $H^p(\D)$) involving the area of the set $\{z\in\D:\ |f(z)|>r\}$ for $r>0$. See Figure \ref{fig}.

\begin{theorem}\label{geom}
Let $p>0$ and $\alpha \ge -1$. Suppose that $f$ is a conformal mapping on $\D$ and, for $r>0$, set $U_r=\{z\in\D:\ |f(z)|>r\}$. Then $f\in A_\alpha^p(\D)$ if and only if
\begin{equation}\nonumber
\int_{0}^{\infty}r^{p-1}\area(U_r)^{\frac{\alpha+2}{2}}dr<\infty. 
\end{equation}
\end{theorem}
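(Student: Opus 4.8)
The plan is to deduce the result from Corollary \ref{corb} and Theorem \ref{bkk} by establishing two one-sided pointwise comparisons between $\area(U_r)$ and the quantities $e^{-d_{\D}(0,F_r)}$ and $\omega_{\D}(0,F_r)$. Since $\int_0^\infty r^{p-1}(\,\cdot\,)\,dr$ converges near $r=0$ for any bounded integrand (as $p>0$), and since for $r\le|f(0)|$ the set $F_r$ need not separate $0$ from $\partial\D$, I would prove the estimates only for $r>|f(0)|$ and absorb the bounded range $0<r\le|f(0)|$ into a finite constant; for $r$ exceeding $\sup|f|$ both sides vanish. Throughout write $d=d_{\D}(0,F_r)$ and $\delta=e^{-d}$. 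A useful first observation is that, for $r>|f(0)|$, every point of $U_r\cup F_r$ is separated from $0$ by $F_r$, hence has hyperbolic distance at least $d$ from $0$, so that $1-|z|\le 2\delta$ there; thus $U_r\cup F_r$ lies in the thin annulus $\{z:1-|z|\le 2\delta\}$.

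The first estimate is the lower bound $\area(U_r)\ge c\,e^{-2d}$, which yields the implication from the area condition to $f\in A_\alpha^p$. To prove it, I would pick $z^*\in F_r$ realizing the hyperbolic distance, so that $|f(z^*)|=r$ and $1-|z^*|\asymp\delta$. Applying the Koebe distortion and $\tfrac14$-theorems to $f$ on $D(z^*,\tfrac14(1-|z^*|))\subset\D$, the image contains a disk centered at $f(z^*)$ of radius comparable to $|f'(z^*)|(1-|z^*|)$; since $f(z^*)$ lies on the circle $\{|w|=r\}$, a fixed proportion of that image disk lies in $\{|w|>r\}$. Pulling this portion back and using $|f'|\asymp|f'(z^*)|$ on the small disk gives $\area(U_r)\ge c\,(1-|z^*|)^2\asymp e^{-2d}$. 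Raising to the power $(\alpha+2)/2$ yields $\area(U_r)^{(\alpha+2)/2}\ge c'\,e^{-(\alpha+2)d}$, so convergence of $\int_0^\infty r^{p-1}\area(U_r)^{(\alpha+2)/2}\,dr$ forces convergence of $\int_0^\infty r^{p-1}e^{-(\alpha+2)d_{\D}(0,F_r)}\,dr$, whence $f\in A_\alpha^p$ by Theorem \ref{bkk}.

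The second, harder estimate is the upper bound $\area(U_r)\le C\,e^{-d}\,\omega_{\D}(0,F_r)$, which gives the reverse implication. Since $U_r$ sits in the thin annulus above, $\area(U_r)\le 2\delta\,|S|$, where $S\subset\partial\D$ is the radial shadow (angular projection) of $U_r$. It then remains to bound $|S|$ by harmonic measure. Each $\theta\in S$ corresponds to a radius meeting $U_r$, hence crossing $F_r$, so $F_r$ separates $0$ from $S$. Comparing, in the component $\Omega_0$ of $\D\setminus F_r$ containing $0$, the harmonic function $\omega_{\D}(\cdot,S)$ (which vanishes on $\partial\D\cap\partial\Omega_0$ and is at most $1$ on $F_r\cap\partial\Omega_0$) with $\omega_{\Omega_0}(\cdot,F_r)$, the maximum principle gives $|S|/2\pi=\omega_{\D}(0,S)\le\omega_{\D}(0,F_r)$. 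Combining, $\area(U_r)\le 4\pi\,e^{-d}\,\omega_{\D}(0,F_r)$.

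With the upper bound in hand, I would take $\beta=\gamma=(\alpha+2)/2$ in Corollary \ref{corb}(2): if $f\in A_\alpha^p$ then $\int_0^\infty r^{p-1}e^{-\frac{\alpha+2}{2}d_{\D}(0,F_r)}\omega_{\D}(0,F_r)^{\frac{\alpha+2}{2}}\,dr<\infty$, and the upper bound gives $\area(U_r)^{(\alpha+2)/2}\le(4\pi)^{(\alpha+2)/2}e^{-\frac{\alpha+2}{2}d}\omega_{\D}(0,F_r)^{\frac{\alpha+2}{2}}$, so the area integral converges. This is exactly where the flexibility of Corollary \ref{corb}, a genuine mixed product rather than either extreme, is needed, reflecting that $\area(U_r)$ is comparable to neither $e^{-(\alpha+2)d}$ nor $\omega_{\D}(0,F_r)^{\alpha+2}$ alone. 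I expect the main obstacle to be the upper bound: the rigorous justification that $U_r$ projects into a boundary set of harmonic measure $\lesssim\omega_{\D}(0,F_r)$, where the separation and maximum-principle step, together with the careful bookkeeping of the several crosscuts comprising $F_r$ (using that each analytic arc of $F_r$ has its two endpoints on $\partial\D$), are where the care is required.
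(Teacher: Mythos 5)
Your deduction of membership from the area condition is correct, and it follows a genuinely different route from the paper's. You obtain the pointwise bound $\area(U_r)\ge c\,e^{-2d_{\D}(0,F_r)}$ by applying the Koebe one-quarter and distortion theorems at a point of $F_r$ (nearly) realizing the hyperbolic distance, and then conclude via Theorem \ref{bkk}; the paper instead proves $\area(U_r)\ge C\,\omega_{\D}(0,E_{2r})^2$ (Lemma \ref{LA}, a length--area argument using the Cauchy--Schwarz inequality on the level curves $F_t$) and concludes via condition (3) of Theorem \ref{main}. Your version checks out: every point of $U_r\cup F_r$ has hyperbolic distance at least $d=d_{\D}(0,F_r)$ from $0$, so $1-|z|\le 2e^{-d}$ there (with the normalization implicit in \eqref{bn}); half of the image disk produced by Koebe lies in $\{|w|>r\}$ because the disk $\{|w|\le r\}$ lies on one side of its tangent line at $f(z^*)$; and the distortion estimates transfer the area back. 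This is more self-contained than the paper's route, while the paper's lemma has the side benefit of an explicit comparison between $\area(U_r)$ and harmonic measure.

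The converse direction, however, has a genuine gap. Like the paper, you reduce it to Corollary \ref{corb}(2) with $\beta=\gamma=\frac{\alpha+2}{2}$ together with the pointwise bound $\area(U_r)\le Ce^{-d_{\D}(0,F_r)}\omega_{\D}(0,F_r)$, which is exactly \eqref{marsmith}; but the paper simply cites \cite[Section 2.1]{Mar} for this inequality, whereas you attempt to prove it, and your proof of the step $|S|\le 2\pi\,\omega_{\D}(0,F_r)$ is invalid. The maximum-principle comparison of $\omega_{\D}(\cdot,S)$ with $\omega_{\Omega_0}(\cdot,F_r)$ in $\Omega_0$ (the component of $\D\setminus F_r$ containing $0$) requires $\omega_{\D}(\cdot,S)$ to tend to $0$ at points of $\partial\Omega_0\cap\partial\D$, i.e., it requires that the shadow $S$ avoid $\partial\Omega_0\cap\partial\D$. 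Your separation observation does not give this: a radius that meets $U_r$ does cross $F_r$, but it may cross $F_r$ an even number of times and terminate at a point of $\partial\Omega_0\cap\partial\D$. In other words, components of $U_r$ can overhang, or spiral over, boundary arcs of $\Omega_0$, in which case $S$ contains a positive-measure subset of $\partial\Omega_0\cap\partial\D$, where $\omega_{\D}(\cdot,S)\to 1$ while $\omega_{\Omega_0}(\cdot,F_r)\to 0$, and the comparison collapses. Winding level sets genuinely occur for conformal maps --- this is the phenomenon behind the examples in \cite{Kar2} showing that $\omega_{\D}(0,F_r)$ and $e^{-d_{\D}(0,F_r)}$ are not comparable --- so this is not a removable degeneracy. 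What your argument actually needs is the passage from the angular measure of a radial projection to harmonic measure; that is Hall's lemma, a real theorem which holds only with a universal constant strictly less than $1$ (so the constant-$1$ inequality you assert is not available), and whose proof is not a maximum-principle exercise. The converse half of your proposal can be repaired either by invoking Hall's lemma at this point (your steps $U_r\subset\{1-|z|\le 2e^{-d}\}$ and $\area(U_r)\le 2e^{-d}|S|$ then yield \eqref{marsmith} up to a constant, which suffices), or by quoting \cite{Mar}, as the paper does.
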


An interesting consequence of Theorems \ref{bkk}, \ref{main} and \ref{geom} is the following result which, in case $\Phi (r)=\omega_{\D}(0,F_r)$, is an extension of Ess{\' e}n's main lemma in \cite{Ess} for conformal mappings to weighted Bergman spaces.

\begin{figure}\label{fig}
	\includegraphics[width=0.98\linewidth]{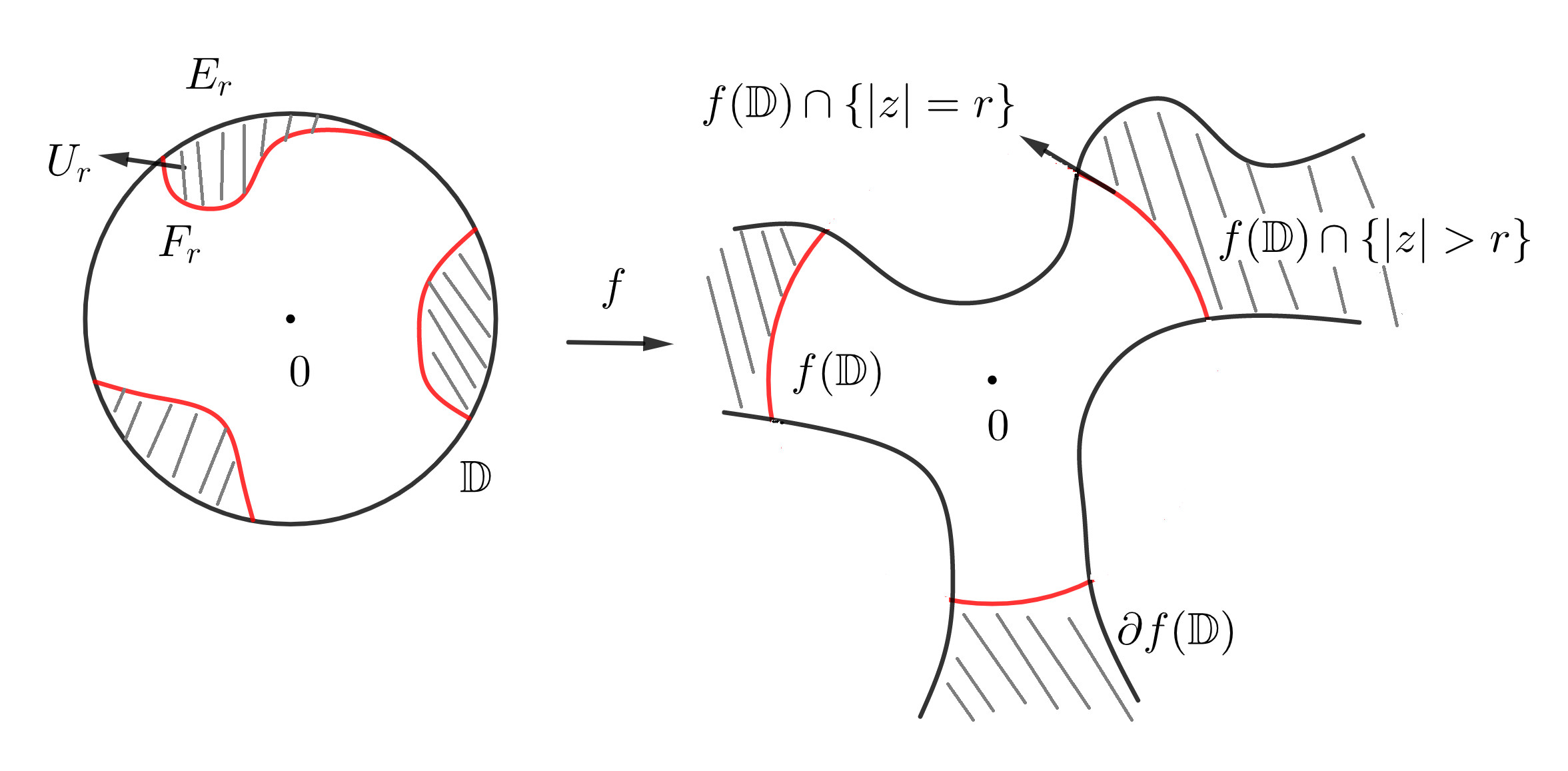}
	\caption{The set $F_r$ is the preimage, under $f$, of $f(\D)\cap\{|z|=r\}$. The  shaded portion of $\D$ is the set $U_r$. It is the preimage of the portion of $f(\D)$ lying outside the red circle of radius $r$. Each component of $\partial U_r$ consists of a component of $F_r$ and a component of $E_r$.}	
\end{figure}

\begin{corollary}\label{l2}
	Let $f$ be a conformal mapping on $\D$. Let $\Phi (r)$ denote $\omega_{\D}(0,F_r)$, $\omega_{\D}(0,E_r)$, $e^{-d_{\D}(0,F_r)}$, or $(\area (U_r))^{1/2}$. If $f\in A_\alpha^p(\D)$ for some $p>0$ and $\alpha\ge-1$, then there is a positive constant $C=C(f,p,\alpha)$ which depends only on $f,p,\alpha$ such that
\[	\Phi (r)\le Cr^{-\frac{p}{\alpha+2}},\]
	for every $r>0$. Moreover, if there are $p'>0$, $\alpha '\ge-1$, $C>0$, and $r_0>0$ such that 
	\[\Phi (r)\le Cr^{-\frac{p'}{\alpha'+2}}\]
	for every $r>r_0$, then $f\in A_\alpha^p(\D)$ for all $p>0$ and $\alpha\geq -1$ such that $\frac{p}{\alpha+2}\in (0,\frac{p'}{\alpha'+2})$. 
\end{corollary}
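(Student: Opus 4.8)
The unifying observation is that for \emph{every} one of the four choices of $\Phi$, Theorems \ref{bkk}, \ref{main} and \ref{geom} assert exactly
\[
f\in A_\alpha^p(\D)\iff \int_{0}^{\infty}r^{p-1}\Phi(r)^{\alpha+2}\,dr<\infty ,
\]
since $\Phi(r)^{\alpha+2}$ equals $\omega_{\D}(0,F_r)^{\alpha+2}$, $\omega_{\D}(0,E_r)^{\alpha+2}$, $e^{-(\alpha+2)d_{\D}(0,F_r)}$, or $\area(U_r)^{(\alpha+2)/2}$ respectively. So the plan is to reduce both assertions to this single integral condition, together with two elementary facts about $\Phi$: each $\Phi$ is bounded (by $1$ in the first three cases and by $\sqrt\pi$ in the last, as $U_r\subseteq\D$), and each is non-increasing for large $r$. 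Note that $\alpha+2\ge 1>0$ throughout, so raising $\Phi$ to the power $\alpha+2$ preserves monotonicity and positivity.

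I would establish the monotonicity first, as this is the crux. For $\Phi(r)=\omega_{\D}(0,E_r)$ and $\Phi(r)=\area(U_r)^{1/2}$ it is immediate: the sets $E_r$ and $U_r$ shrink as $r$ grows, so monotonicity of harmonic measure and of area under inclusion finishes it. The $F_r$-quantities look more delicate---indeed the introduction stresses that $\omega_{\D}(0,F_r)$ and $e^{-d_{\D}(0,F_r)}$ are not pointwise comparable---but both follow from a single level-crossing observation: for $|f(0)|<r_1<r_2$, any path in $\D$ from $0$ to a point of $F_{r_2}$ must meet $F_{r_1}$, because $|f|$ is continuous and runs from $|f(0)|$ past $r_1$ to $r_2$ along it. Applied to hyperbolic geodesics this gives $d_{\D}(0,F_{r_1})\le d_{\D}(0,F_{r_2})$, so $e^{-d_{\D}(0,F_r)}$ is non-increasing for $r>|f(0)|$; applied to Brownian paths (first attaining level $r_2$ before exiting $\D$ forces first attaining level $r_1$ before exiting $\D$) it gives $\omega_{\D}(0,F_{r_2})\le\omega_{\D}(0,F_{r_1})$, after identifying $\omega_{\D}(0,F_r)$ with the probability that the process $|f(B_t)|$, for Brownian motion $B_t$ started at $0$, reaches the value $r$ before $B_t$ leaves $\D$.

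For the first assertion I would invoke the standard estimate for a non-increasing integrable density. Writing $g(r)=\Phi(r)^{\alpha+2}$, which is non-increasing for $r>|f(0)|$, and using $\int_0^\infty r^{p-1}g(r)\,dr<\infty$ (from $f\in A_\alpha^p(\D)$ and the equivalence above), I would bound, for $r>2|f(0)|$,
\[
g(r)\,\frac{(1-2^{-p})\,r^{p}}{p}=g(r)\int_{r/2}^{r}t^{p-1}\,dt\le\int_{r/2}^{r}t^{p-1}g(t)\,dt\le\int_{0}^{\infty}t^{p-1}g(t)\,dt ,
\]
whence $g(r)\le C r^{-p}$ and $\Phi(r)\le C^{1/(\alpha+2)}r^{-p/(\alpha+2)}$ for large $r$. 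The bound extends to all $r>0$ because $\Phi$ is bounded while $r^{-p/(\alpha+2)}$ blows up as $r\to 0^+$, so enlarging the constant covers the remaining range.

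For the second assertion, given $\Phi(r)\le C r^{-p'/(\alpha'+2)}$ for $r>r_0$, I would split $\int_0^\infty r^{p-1}\Phi(r)^{\alpha+2}\,dr=\int_0^{r_0}+\int_{r_0}^{\infty}$. The first piece is finite since $\Phi$ is bounded and $p>0$. In the second, the hypothesis yields an integrand at most $C^{\alpha+2}r^{\,p-1-(\alpha+2)p'/(\alpha'+2)}$, whose exponent is $<-1$ exactly when $\frac{p}{\alpha+2}<\frac{p'}{\alpha'+2}$, so the integral converges; by the equivalence, $f\in A_\alpha^p(\D)$. The main obstacle throughout is the monotonicity of the two $F_r$-quantities; once the level-crossing principle is in hand, the rest is the routine decreasing-density estimate and a convergence check.
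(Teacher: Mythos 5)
Your proposal is correct and follows essentially the same route as the paper: both reduce all four cases to the single integral condition $\int_0^\infty r^{p-1}\Phi(r)^{\alpha+2}\,dr<\infty$ via Theorems \ref{bkk}, \ref{main} and \ref{geom}, then combine monotonicity of $\Phi$ with the elementary decreasing-density estimate for the first part and a direct convergence check for the second. If anything, your write-up is a bit more careful than the paper's: the paper asserts that $\omega_{\D}(0,F_r)$ (and the other quantities) are decreasing and integrates over all of $(0,R)$, whereas you correctly note that the two $F_r$-quantities are only monotone for $r>|f(0)|$, work on $[r/2,r]$ accordingly, and recover the range of small $r$ from the boundedness of $\Phi$.
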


Next, we define a new notion which is the analogue of the Hardy number for weighted Bergman spaces. We call this the Bergman number. This allows us to establish new results for the Hardy number and the relation between Hardy and weighted Bergman spaces. Furthermore, it provides new ways to determine whether a conformal mapping on $\D$ belongs to $A_\alpha^p(\D)$; this time by studying limits instead of integrals.

In \cite{Han1}, Hansen studied the problem of determining the numbers $p>0$ for which a holomorphic function $f$ on $\mathbb{D}$ belongs to ${H^p}\left( \mathbb{D} \right)$ by studying $f\left( \mathbb{D} \right)$. For this purpose, he introduced a number which he called the Hardy number of a region. Since we are studying conformal mappings, we state the definition just in this case. Let $f$ be a conformal mapping on $\D$. The Hardy number of $f$ is defined by 
\[h(f) = \sup \left\{ {p > 0:f \in {H^p}\left( \mathbb{D} \right)} \right\}.\]
Since $f$ is conformal, according to what we mentioned above, $h(f)$ takes values in $[1/2,\infty]$. The Hardy number has been studied extensively over the years. A classical problem is to find estimates or exact descriptions for it. See, for example, \cite{Han1}, \cite{Kar3} \cite{Kim}, \cite{Cor} and references therein.

In a similar way, we introduce a corresponding number for weighted Bergman spaces. Let $f$ be a conformal mapping on $\D$. We define the Bergman number, $b(f)$, of $f$ as
\[b(f)=\sup\left\{ \frac{p}{\alpha+2}:\ f\in A_{\alpha}^p(\D) \right\}.\]
By Corollary \ref{corol} we deduce that $b(f)\ge 1/2$. Corollary \ref{l2} and the definition above imply the following result.

\begin{corollary}\label{bnd}
Let $p>0$, $\alpha>-1$. If $\frac{p}{\alpha+2}<b(f)$ then $f\in A_{\alpha}^p(\D)$ and if $\frac{p}{\alpha+2}>b(f)$ then $f\notin A_{\alpha}^p(\D)$.	
\end{corollary}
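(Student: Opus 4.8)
The plan is to obtain both assertions directly from the definition of $b(f)$ as a supremum, combined with the two-part estimate of Corollary \ref{l2}. The negative direction is immediate from the supremum alone: if $f\in A_\alpha^p(\D)$, then by definition the ratio $\frac{p}{\alpha+2}$ is one of the numbers over which the supremum defining $b(f)$ is taken, so $\frac{p}{\alpha+2}\le b(f)$; the contrapositive gives that $\frac{p}{\alpha+2}>b(f)$ forces $f\notin A_\alpha^p(\D)$.

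For the positive direction I would argue as follows. Assume $\frac{p}{\alpha+2}<b(f)$. Since $b(f)=\sup\{\frac{q}{\beta+2}:\ f\in A_\beta^q(\D)\}$ and $\frac{p}{\alpha+2}$ lies strictly below this supremum, $\frac{p}{\alpha+2}$ fails to be an upper bound of the set, so there exist $p'>0$ and $\alpha'\ge-1$ with $f\in A_{\alpha'}^{p'}(\D)$ and
\[\frac{p}{\alpha+2}<\frac{p'}{\alpha'+2}.\]
Fixing any one of the four admissible choices for $\Phi$, say $\Phi(r)=(\area(U_r))^{1/2}$, the first part of Corollary \ref{l2} applied to the pair $(p',\alpha')$ produces a constant $C>0$ with $\Phi(r)\le Cr^{-p'/(\alpha'+2)}$ for all $r>0$. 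Feeding this decay estimate into the second part of Corollary \ref{l2}, and using that $\frac{p}{\alpha+2}\in(0,\frac{p'}{\alpha'+2})$, I conclude $f\in A_\alpha^p(\D)$.

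I do not anticipate a genuine obstacle, since the statement essentially repackages Corollary \ref{l2}. The single point that needs care is securing the \emph{strict} inequality $\frac{p}{\alpha+2}<\frac{p'}{\alpha'+2}$: this is precisely what the supremum characterization of $b(f)$ furnishes, and it is exactly what is required to place $\frac{p}{\alpha+2}$ inside the \emph{open} interval $(0,\frac{p'}{\alpha'+2})$ appearing in the hypothesis of the second half of Corollary \ref{l2}.
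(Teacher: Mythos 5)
Your proof is correct and follows essentially the same route as the paper: the negative direction from the supremum definition of $b(f)$, and the positive direction by feeding the decay estimate from the first part of Corollary \ref{l2} (applied to a pair $(p',\alpha')$ with $f\in A_{\alpha'}^{p'}(\D)$ and $\frac{p}{\alpha+2}<\frac{p'}{\alpha'+2}$) into its second part. If anything, your version is slightly cleaner, since the paper inserts an intermediate $\epsilon$ with $\frac{p}{\alpha+2}+\epsilon<b(f)$ that is not actually needed once the strict inequality $\frac{p}{\alpha+2}<\frac{p'}{\alpha'+2}$ is extracted directly from the supremum.
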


As in the case of the Hardy number, if $\frac{p}{\alpha+2}=b(f)$ then both can happen (see \cite{Kar3}). Using Corollary \ref{l2} and Theorems \ref{bkk}, \ref{main} and \ref{geom} we prove that $b(f)$ is given by the following limits.

\begin{theorem}\label{benu}
Let $f$ be a conformal mapping on $\D$. If $F_r=\{z\in \D : |f(z)|=r \}$, $E_r=\{\zeta\in \partial \D:|f(\zeta)|> r\}$  and $U_r=\{z\in\D :|f(z)|>r\}$, then
\begin{enumerate}
\item $\displaystyle b(f)=\liminf_{r\to \infty}\frac{\log\left(\omega_{\D}(0,F_r)\right)^{-1}}{\log r}$,
\item $\displaystyle b(f)=\liminf_{r\to \infty}\frac{\log\left(\omega_{\D}(0,E_r)\right)^{-1}}{\log r}$,
\item $\displaystyle b(f)=\liminf_{r\to \infty}\frac{d_{\D}(0,F_r)}{\log r}$,
\item $\displaystyle b(f)=\frac{1}{2}\liminf_{r\to \infty}\frac{\log \left(\area(U_r)\right)^{-1}}{\log r}$.
\end{enumerate}
\end{theorem}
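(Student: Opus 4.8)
The plan is to reduce all four identities to a single statement. I would set, for a function $\Phi(r)$,
\[ L(\Phi)=\liminf_{r\to\infty}\frac{\log(\Phi(r))^{-1}}{\log r}, \]
and observe that (1)--(4) are precisely the assertions $b(f)=L(\Phi)$ for $\Phi(r)=\omega_{\D}(0,F_r)$, $\omega_{\D}(0,E_r)$, $e^{-d_{\D}(0,F_r)}$ and $(\area(U_r))^{1/2}$, respectively: for (3) this is just $\log(e^{-d_{\D}(0,F_r)})^{-1}=d_{\D}(0,F_r)$, and for (4) it is $\log((\area(U_r))^{1/2})^{-1}=\tfrac12\log(\area(U_r))^{-1}$. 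Since these are exactly the four functions $\Phi$ appearing in Corollary \ref{l2}, it suffices to prove the single identity $b(f)=L(\Phi)$ for each admissible $\Phi$, with the argument identical in all four cases.

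For the inequality $b(f)\le L(\Phi)$ I would fix any $p>0$, $\alpha\ge-1$ with $q:=\tfrac{p}{\alpha+2}<b(f)$. By Corollary \ref{bnd}, $f\in A_\alpha^p(\D)$, so the first part of Corollary \ref{l2} furnishes a constant $C>0$ with $\Phi(r)\le Cr^{-q}$ for every $r>0$. Taking logarithms gives $\log(\Phi(r))^{-1}\ge q\log r-\log C$, whence $\tfrac{\log(\Phi(r))^{-1}}{\log r}\ge q-\tfrac{\log C}{\log r}$; letting $r\to\infty$ yields $L(\Phi)\ge q$. Since every $q\in(0,b(f))$ is realized by some admissible pair (take $\alpha=0$, $p=2q$), taking the supremum over such $q$ gives $L(\Phi)\ge b(f)$.

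For the reverse inequality I would fix $s$ with $0<s<L(\Phi)$. By the definition of the $\liminf$ there is $r_0>0$ such that $\tfrac{\log(\Phi(r))^{-1}}{\log r}>s$, i.e.\ $\Phi(r)\le r^{-s}$, for all $r>r_0$. Writing $s=\tfrac{p'}{\alpha'+2}$ with $\alpha'=0$ and $p'=2s$, the second part of Corollary \ref{l2} (with $C=1$) shows $f\in A_\alpha^p(\D)$ for every $p>0$, $\alpha\ge-1$ with $\tfrac{p}{\alpha+2}\in(0,s)$. By the definition of $b(f)$ this forces $b(f)\ge s$, and letting $s\uparrow L(\Phi)$ gives $b(f)\ge L(\Phi)$; combined with the previous paragraph this yields $b(f)=L(\Phi)$ in all four cases.

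The substantive analytic work has already been absorbed into Corollary \ref{l2}, so the main obstacle here is purely bookkeeping: ensuring the squeeze is tight. The key points to check are that the threshold $r_0$ in the sufficient direction is harmless because the $\liminf$ only sees large $r$, that the exponents $q,s$ can be prescribed freely in $(0,\infty)$ via the pairs $(\alpha,p)=(0,2q)$, and that the asymmetry between the two halves of Corollary \ref{l2} (a bound for all $r>0$ in the necessary direction versus a bound for $r>r_0$ in the sufficient one) matches the one-sided limit exactly. Finally I would record the degenerate case in which $\Phi(r)$ vanishes (equivalently $d_{\D}(0,F_r)=\infty$) for all large $r$, which occurs precisely when $f$ is bounded; there $L(\Phi)=\infty$ and also $b(f)=\infty$, since $f\in A_\alpha^p(\D)$ for all $p,\alpha$, so the identity persists, and indeed the argument above goes through verbatim under the convention $\log 0^{-1}=\infty$.
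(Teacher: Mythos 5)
Your proposal is correct and takes essentially the same approach as the paper: the paper also treats all four cases uniformly through Corollary \ref{l2}, proving $L(\Phi)\ge b(f)$ from the pointwise bound $\Phi(r)\le Cr^{-p/(\alpha+2)}$ and proving $b(f)\ge L(\Phi)$ by the very estimate that the second half of Corollary \ref{l2} packages (the paper just re-derives it inline via Theorem \ref{main} instead of citing the corollary, and absorbs your appeal to Corollary \ref{bnd} into taking the supremum over all pairs with $f\in A_\alpha^p(\D)$). Your explicit treatment of the bounded (degenerate) case is a minor addition the paper leaves implicit.
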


By Lemma 3.2 in \cite{Kim} and Theorem 1.1 in \cite{Kar3}, an immediate consequence of Theorem \ref{benu} is that if $f$ is a conformal mapping on $\D$, then 

\begin{equation}\label{habe}
h(f)=b(f). 
\end{equation}

Thus, by Theorem \ref{benu} and (\ref{habe}) we derive new characterizations of the Hardy number in terms of the harmonic measure $\omega_{\D}(0,E_r)$ and the area of $U_r$. That is,
\[h(f)=\liminf_{r\to \infty}\frac{\log\left(\omega_{\D}(0,E_r)\right)^{-1}}{\log r}\]
and
\[h(f)=\frac{1}{2}\liminf_{r\to \infty}\frac{\log \left(\area(U_r)\right)^{-1}}{\log r}.\] 

Moreover, by (\ref{habe}) and Theorem \ref{main}, we can establish the following inclusions between Hardy spaces and weighted Bergman spaces. Let $\mathcal{U}$ denote the set of all conformal mappings on $\D$. By a theorem of Hardy and Littlewood \cite{HL} (also see \cite{Ba2}), for any $p>0$, $H^p(\D)\subset A^{2p}(\D)$ ($\alpha=0$). The second part of the following corollary is an improvement of this fact for the class $\mathcal{U}$.

\begin{corollary}\label{inclu}
Let $p>0$, $\alpha>-1$. We have
\[A_{\alpha}^p(\D)\cap\mathcal{U}\subset H^{q}(\D)\cap\mathcal{U},\]
for every $q\in(0,\frac{p}{\alpha+2})$. Moreover,
\[H^{q}(\D)\cap\mathcal{U}\subset A_{\alpha}^p(\D)\cap\mathcal{U},\]
for every $p>0$, $\alpha>-1$ satisfying $\frac{p}{\alpha+2}\in (0,q]$.
\end{corollary}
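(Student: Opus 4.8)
The plan is to deduce both inclusions from the identity $h(f)=b(f)$ in \eqref{habe}, together with the decay estimates of Corollary \ref{l2} and the integral characterization of Theorem \ref{main}. Throughout, I would keep in mind that $f\in A_\alpha^p(\D)\cap\mathcal{U}$ forces $\frac{p}{\alpha+2}\le b(f)$ directly from the definition of the Bergman number, and likewise $f\in H^q(\D)\cap\mathcal{U}$ forces $q\le h(f)$ from the definition of the Hardy number. The only genuine subtlety is the endpoint $\frac{p}{\alpha+2}=q$ in the second inclusion, where the strict-inequality criterion of Corollary \ref{bnd} is unavailable.

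For the first inclusion, I would fix $f\in A_\alpha^p(\D)\cap\mathcal{U}$ and $q\in(0,\frac{p}{\alpha+2})$. Since $f\in A_\alpha^p(\D)$ gives $\frac{p}{\alpha+2}\le b(f)=h(f)$, the choice of $q$ yields $q<h(f)$. By the definition of $h(f)$ as a supremum there is a $p'>q$ with $f\in H^{p'}(\D)$, and the nesting $H^{p'}(\D)\subset H^q(\D)$ then gives $f\in H^q(\D)$; as $f$ is conformal, $f\in H^q(\D)\cap\mathcal{U}$. The strict inequality $q<\frac{p}{\alpha+2}$ means no boundary case arises here.

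For the second inclusion, fix $f\in H^q(\D)\cap\mathcal{U}$ and $p,\alpha$ with $\frac{p}{\alpha+2}\in(0,q]$. When $\frac{p}{\alpha+2}<q$, I would simply note $\frac{p}{\alpha+2}<q\le h(f)=b(f)$ and apply Corollary \ref{bnd} to conclude $f\in A_\alpha^p(\D)$. The hard part is the endpoint $\frac{p}{\alpha+2}=q$, i.e.\ $p=q(\alpha+2)$, which I would handle directly with integrals. Writing $\omega(r)=\omega_{\D}(0,F_r)$, the hypothesis $f\in H^q(\D)$ gives $\int_0^\infty r^{q-1}\omega(r)\,dr<\infty$ by \eqref{poco1}, while Corollary \ref{l2} (with $\Phi=\omega_{\D}(0,F_r)$, applied to $f\in A_{-1}^q(\D)$) supplies a constant $C>0$ with $\omega(r)\le Cr^{-q}$ for all $r>0$. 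Splitting $\omega(r)^{\alpha+2}=\omega(r)\cdot\omega(r)^{\alpha+1}$ and bounding only the factor $\omega(r)^{\alpha+1}\le C^{\alpha+1}r^{-q(\alpha+1)}$ (legitimate since $\alpha+1>0$), the exponent collapses:
\[
r^{p-1}\omega(r)^{\alpha+2}\le C^{\alpha+1}r^{\,p-1-q(\alpha+1)}\omega(r)=C^{\alpha+1}r^{q-1}\omega(r),
\]
because $p-1-q(\alpha+1)=q(\alpha+2)-1-q(\alpha+1)=q-1$. Integrating and invoking Theorem \ref{main} then yields $f\in A_\alpha^p(\D)$, completing the inclusion.

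The main obstacle is precisely this endpoint estimate. The naive route of inserting $\omega(r)\le Cr^{-q}$ into all $\alpha+2$ factors produces the bound $r^{p-1}\omega(r)^{\alpha+2}\le C^{\alpha+2}r^{-1}$, whose integral diverges; the point is to spend the decay estimate only on the excess $\alpha+1$ powers and retain one full factor of $\omega(r)$ so as to match the convergent Hardy integral from \eqref{poco1}. I expect the verification that the exponent arithmetic balances, together with the care needed to stay within the hypotheses of Corollary \ref{l2} and Theorem \ref{main} at the boundary, to be the only nontrivial points.
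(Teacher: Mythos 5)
Your proof is correct and follows essentially the same route as the paper: the first inclusion is the identical supremum-plus-nesting argument, and your endpoint computation (keeping one factor of $\omega_{\D}(0,F_r)$ and spending the decay bound $\omega_{\D}(0,F_r)\le Cr^{-q}$ from Corollary \ref{l2} on the remaining $\alpha+1$ powers, then invoking \eqref{poco1} and Theorem \ref{main}) is precisely the paper's estimate. The only difference is cosmetic: the paper runs this same computation uniformly for all $\frac{p}{\alpha+2}\le q$ (using $r^{p-1-q(\alpha+1)}\le r^{q-1}$ for $r\ge 1$), so your separate treatment of the strict case via Corollary \ref{bnd} and \eqref{habe}, while valid, is not actually needed.
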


In Section \ref{exa} we give an example of a conformal mapping $f$ such that $f\in A_1^3(\D)$ but $f\notin H^1(\D)$, showing that the second inclusion of Corollary \ref{inclu} is, in general, strict. Furthermore, we prove that this conformal mapping $f$ has the property that $f\in A_1^3(\D)$ but $f\notin A_0^2(\D)$. This gives a negative answer to the following natural question. If  $\alpha,\alpha'>-1$ and $p,p'>0$ are such that $\frac{p}{\alpha+2}=\frac{p'}{\alpha'+2}$, is it true that 
\[A_{\alpha}^p(\D)\cap\mathcal{U}=A_{\alpha'}^{p'}(\D)\cap\mathcal{U}?\]

We now discuss a different characterization of conformal mappings in weighted Bergman spaces. For $t\in [0,1)$, let $M_f(t)=M(t):=\max_{|z|=t}|f(z)|$. We have the following result.
\begin{customthm}{B}\label{bgp}
\textit{Let $p>0$, $\alpha\geq -1$ and suppose $f$ is a conformal mapping on $\D$. Then $f\in A_{\alpha}^p(\D)$ if and only if
\begin{equation}\label{growth}
\int_{0}^{1}(1-t)^{\alpha+1}M(t)^pdt<\infty.
\end{equation}
}
\end{customthm}

The case $\alpha=-1$ is a statement about Hardy spaces and it is due to Hardy and Littlewood \cite{HL}, Pommerenke \cite{Pom1}, and Prawitz \cite{Pra}. The case $\alpha>-1$, is due to Baernstein, Girela and Pel{\'a}ez \cite{Ba2} and also to P{\'e}rez-Gonz{\'a}lez and R{\"a}tty{\"a} \cite{Per}. Using Theorem \ref{bgp} and a geometric characterization for conformal mappings in weighted Bergman spaces which appears in \cite[relation (2.9)]{Per}, we can prove the following corollary for conformal mappings in $A_{\alpha}^p(\D)$ (or $H^p(\D)$). Let $D(0,r)$ denote the disk centered at $0$ of radius $r$. If $f$ is a conformal mapping on $\D$, we set  $\psi(r)=\area\left(f(D(0,r))\right)$, for $r\in (0,1)$.

\begin{corollary}\label{g1}
Let $p>0$, $\alpha\geq-1$ and suppose $f\in A_{\alpha}^p(\D)$ is a conformal mapping. Then
\begin{enumerate}
\item $\displaystyle \lim_{r\to 1}(1-r)M(r)^{\frac{p}{\alpha+2}}=0$,
\item $\displaystyle \lim_{r\to 1}(1-r)\psi(r)^{\frac{p}{2(\alpha+2)}}=0$.
\end{enumerate}
\end{corollary}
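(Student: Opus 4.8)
The plan is to deduce both limits from integral characterizations of membership in $A_\alpha^p(\D)$, exploiting the monotonicity of $M$ and of $\psi$. The common mechanism is elementary: if $g$ is a nonnegative nondecreasing function with $\int_0^1 (1-t)^{\alpha+1}g(t)\,dt<\infty$, then the tail $\int_r^1 (1-t)^{\alpha+1}g(t)\,dt$ tends to $0$ as $r\to 1$, while monotonicity supplies the lower bound
\[
\int_r^1 (1-t)^{\alpha+1}g(t)\,dt \;\ge\; g(r)\int_r^1 (1-t)^{\alpha+1}\,dt \;=\; \frac{g(r)(1-r)^{\alpha+2}}{\alpha+2},
\]
which is valid because $\alpha+2\ge 1>0$. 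Combining these gives $g(r)(1-r)^{\alpha+2}\to 0$ as $r\to 1$.

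For part (1), I first note that $M$ is nondecreasing by the maximum modulus principle. Since $f\in A_\alpha^p(\D)$, Theorem \ref{bgp} yields $\int_0^1 (1-t)^{\alpha+1}M(t)^p\,dt<\infty$. Applying the mechanism above with $g=M^p$ gives $M(r)^p(1-r)^{\alpha+2}\to 0$, and raising to the power $1/(\alpha+2)$ produces $(1-r)M(r)^{p/(\alpha+2)}\to 0$, which is exactly (1).

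For part (2), I observe that $\psi$ is nondecreasing: since $f$ is injective and $D(0,r_1)\subset D(0,r_2)$ forces $f(D(0,r_1))\subset f(D(0,r_2))$, the areas increase. The geometric characterization in \cite[relation (2.9)]{Per} provides the analogue of Theorem \ref{bgp} phrased in terms of $\psi$, namely that $f\in A_\alpha^p(\D)$ is equivalent to $\int_0^1 (1-t)^{\alpha+1}\psi(t)^{p/2}\,dt<\infty$. Applying the mechanism with $g=\psi^{p/2}$ gives $\psi(r)^{p/2}(1-r)^{\alpha+2}\to 0$, and raising to the power $1/(\alpha+2)$ yields $(1-r)\psi(r)^{p/(2(\alpha+2))}\to 0$, which is (2).

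The argument is essentially routine once both integral characterizations are in hand, so I do not expect a genuine obstacle; the only real content is recognizing that the correct quantities to feed into the monotone-tail estimate are $M^p$ and $\psi^{p/2}$ paired against the weight $(1-t)^{\alpha+1}$, together with the two monotonicity facts. The one point I would verify carefully is the precise form of \cite[relation (2.9)]{Per}: the computation hinges on the exponent on $\psi$ being $p/2$ (dimensionally matching $M^p$, since $\psi^{1/2}$ scales like a length), so that it correctly pairs with the exponent $p/(2(\alpha+2))$ appearing in the claimed limit.
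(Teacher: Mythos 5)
Your proposal is correct and follows essentially the same argument as the paper: the paper likewise bounds the tail integral $\int_r^1(1-t)^{\alpha+1}M(t)^p\,dt$ from below by $\frac{1}{\alpha+2}(1-r)^{\alpha+2}M(r)^p$ using monotonicity of $M$, invokes Theorem \ref{bgp} to make the tail vanish, and then repeats the identical argument for $\psi$ using \cite[(2.9)]{Per}. The only cosmetic difference is that you abstract the mechanism into a general statement about nondecreasing $g$; no mathematical content differs.
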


Theorem \ref{bgp}, Corollary \ref{g1}, and the aforementioned geometric characterization in \cite{Per}, allow us to provide the following expressions for the Bergman number, and thus for the Hardy number as well, in terms of the functions $M_f$ and $\psi$.
\begin{theorem}\label{g2}
Suppose $f$ is a conformal mapping in $\D$. Then
\begin{enumerate}
\item $\displaystyle b(f)=h(f)=\liminf_{r\to 1}\frac{-\log(1-r)}{\log M(r)}$,
\item $\displaystyle b(f)=h(f)=2\liminf_{r\to 1}\frac{-\log(1-r)}{\log \psi(r)}$,
\item $\displaystyle b(f)=h(f)=\sup\{\lambda>0:\ \lim_{r\to 1}(1-r)M(r)^{\lambda}=0\}$,
\item $\displaystyle b(f)=h(f)=2\sup\{\lambda>0:\ \lim_{r\to 1}(1-r)\psi(r)^{\lambda}=0\}$.
\end{enumerate}
\end{theorem}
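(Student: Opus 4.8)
The plan is to reduce everything to the computation of $b(f)$, using the identity $h(f)=b(f)$ from \eqref{habe}, and then to run two parallel arguments: one for $M$, built on Theorem \ref{bgp}, yielding parts (1) and (3); and one for $\psi$, built on the geometric characterization in \cite[relation (2.9)]{Per}, yielding parts (2) and (4). The two shapes of formula (a $\liminf$ versus a $\sup$) will be linked by a single elementary lemma, so that the real work is concentrated in the $\sup$-formulas.

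First I would record the lemma: if $g\colon[0,1)\to(0,\infty)$ satisfies $g(r)\to\infty$ as $r\to1^{-}$, then
\[\sup\{\lambda>0:\ \lim_{r\to1}(1-r)g(r)^{\lambda}=0\}=\liminf_{r\to1}\frac{-\log(1-r)}{\log g(r)}.\]
This follows by rewriting $(1-r)g(r)^{\lambda}\to0$ as $\lambda\log g(r)+\log(1-r)\to-\infty$ and comparing $\lambda$ against the ratio $-\log(1-r)/\log g(r)$ along sequences realizing the $\liminf$. Applying it with $g=M$ identifies (1) with (3), and with $g=\psi^{1/2}$ identifies (2) with (4), the factor $2$ arising from $\log\psi(r)^{1/2}=\tfrac12\log\psi(r)$. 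The hypothesis $g(r)\to\infty$ is exactly the requirement that $f(\D)$ be unbounded (for $M$) or have infinite area (for $\psi$); in the complementary cases, where $f\in H^{\infty}$ or $\area(f(\D))<\infty$, one has $b(f)=\infty$, and the $\sup$-formulas (3),(4) return $\infty$ directly since $(1-r)M(r)^{\lambda}\to0$, respectively $(1-r)\psi(r)^{\mu}\to0$, for every exponent.

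It then remains to prove the $\sup$-formulas. For (3), set $S_M=\sup\{\lambda>0:\ (1-r)M(r)^{\lambda}\to0\}$. To get $b(f)\le S_M$ I would take any $s<b(f)$, choose $\alpha>-1$ and $p>0$ with $p/(\alpha+2)=s$ so that $f\in A_{\alpha}^p(\D)$ by Corollary \ref{bnd}, and then invoke Corollary \ref{g1}(1) to obtain $(1-r)M(r)^{s}\to0$, whence $s\le S_M$. For the reverse inequality I would note that $(1-r)M(r)^{\lambda}\to0$ forces $M(r)\le C(1-r)^{-1/\lambda}$ for $r$ near $1$, and inserting this into the criterion of Theorem \ref{bgp} shows that $\int_0^1(1-t)^{\alpha+1}M(t)^p\,dt$ converges whenever $p/(\alpha+2)<\lambda$; hence every $s<\lambda$ satisfies $s\le b(f)$, and letting $s\uparrow\lambda$ gives $\lambda\le b(f)$. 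Part (4) goes the same way, with $S_\psi=\sup\{\mu>0:\ (1-r)\psi(r)^{\mu}\to0\}$, using Corollary \ref{g1}(2) in place of \ref{g1}(1) and the area criterion \cite[relation (2.9)]{Per} in place of Theorem \ref{bgp}; the exponent $p/(2(\alpha+2))$ appearing in Corollary \ref{g1}(2) is precisely what produces the relation $b(f)=2S_\psi$.

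I expect the main obstacle to be the bookkeeping at the critical exponent. Corollary \ref{g1} delivers the vanishing $(1-r)M(r)^{s}\to0$ exactly at the endpoint $s=p/(\alpha+2)$, whereas the integral tests of Theorem \ref{bgp} and \cite{Per} converge only under a strict inequality; closing this gap requires the $s\uparrow\lambda$ approximation above, with a matching $\varepsilon$ of room on each side so that neither inequality degenerates at $\lambda=b(f)$. A secondary point needing care is the degenerate regime: one must cleanly separate the unbounded (infinite-area) case, where the reduction lemma applies verbatim, from the bounded (finite-area) case, where $M$ (respectively $\psi$) fails to tend to $\infty$ and the $\liminf$-formulas (1),(2) must be read in the limiting sense $b(f)=\infty$ supplied by (3),(4).
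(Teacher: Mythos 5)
Your proposal is correct, and while it uses the same basic ingredients as the paper (Corollary \ref{g1}, Theorem \ref{bgp}, the area criterion \cite[(2.9)]{Per}, and the identity \eqref{habe}), it is organized along a genuinely different route. The paper proves (1) and (3) as two separate, self-contained arguments: for (1) it sets $L_f=\liminf_{r\to1}\frac{-\log(1-r)}{\log M(r)}$, gets $b(f)\le L_f$ from Corollary \ref{g1}, and gets $b(f)\ge L_f$ by showing $f\in H^{\mu(1-\epsilon)}$ for every $\mu<L_f$ and $\epsilon\in(0,1)$ (the Hardy case $\alpha=-1$ of Theorem \ref{bgp}) and then invoking $h(f)=b(f)$; for (3) it argues by contradiction, producing $f\in H^{b(f)+\epsilon/2}$ and again leaning on \eqref{habe}. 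You instead prove only the $\sup$-formulas (3) and (4) from the function theory, and recover the $\liminf$-formulas (1) and (2) through the purely real-variable identity
\[
\sup\{\lambda>0:\ \lim_{r\to1}(1-r)g(r)^{\lambda}=0\}=\liminf_{r\to1}\frac{-\log(1-r)}{\log g(r)},\qquad g(r)\to\infty,
\]
applied with $g=M$ and $g=\psi^{1/2}$. Moreover, your lower bound $S_M\le b(f)$ is direct rather than by contradiction: you feed $M(r)\le C(1-r)^{-1/\lambda}$ into the weighted criterion \eqref{growth} for \emph{arbitrary} $\alpha$, obtaining $f\in A_{\alpha}^p(\D)$ whenever $\frac{p}{\alpha+2}<\lambda$, so that \eqref{habe} is needed only to attach $h(f)$ to the statement, not to compute $b(f)$. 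A further merit of your treatment is the explicit separation of the degenerate case where $M$ (resp. $\psi$) stays bounded: there $b(f)=\infty$, the $\sup$-formulas still return $\infty$, and the $\liminf$-formulas must be read in a limiting sense; the paper passes over this silently (its step from $(1-r)M(r)^{p/(\alpha+2)}\le 1$ to the ratio inequality tacitly requires $\log M(r)>0$ eventually). The trade-off is that the paper's per-item arguments are shorter, whereas yours concentrates the bookkeeping in one reusable lemma and proves slightly more in the boundary cases.
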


The proofs of all the results above follow in Section \ref{proofs}. 

\section{Proofs}\label{proofs}
\subsection{Proof of Theorem \ref{main}}

The main tools we use in the proof of Theorem \ref{main} include Smith's characterization of functions belonging to weighted Bergman spaces \cite{Smith}, a potential theoretic identity of Baernstein connecting harmonic measure and the Green function \cite{Ba1} and an estimate for harmonic measure due to Poggi-Corradini \cite{Cor2}.

\proof
Let $f$ be a conformal mapping defined on $\D$ and for $r>0$, let $F_r$ and $E_r$ be as in the statement of the theorem. Set $D=f(\D)$. By comparing boundary values and using the maximum principle, we have that
\[\omega_{\D}(0,E_r)\le \omega_{\D}(0,F_r),\]
for any $r>0$. Moreover, it is proved in \cite[p. 34]{Cor} (see also \cite[Lemma 3.3(i)]{Cor2}) that there exists a constant $M>1$ such that
\begin{equation}\label{hmest}
\omega_{\D}(0,F_{Mr})\le 2\omega_{\D}(0,E_r),
\end{equation}
for any $r>|f(0)|$. Therefore, combining these two estimates and using a change of variable, we can easily show that (2) and (3) are equivalent.

Suppose now that (2) holds, i.e.,
\[\int_{0}^{\infty} r^{p-1}\omega_{\D} (0,F_r)^{\alpha+2}dr<\infty.\]
Theorem \ref{bkk} and \eqref{bn} imply directly that $f\in A_{\alpha}^p (\D)$ and thus (2) implies (1). Conversely, assume that (1) holds. Since (2) and (3) are equivalent, in order to complete the proof of the theorem, it suffices to prove that (3) also holds. Since $f\in A_\alpha^p(\D)$, it follows (see \cite[p. 2336]{Smith}) that
\begin{equation}\label{finite}
\int_{\D} {|f(z)|^{p - 2} | f '(z)|^2 \left(\log \frac{1}{|z|}\right)^{\alpha+2}dA(z)}<\infty.
\end{equation}
For the Green function (see \cite{Gar} for the definition) of the domain $D$, we set $g_D(f(0),w)=0$, for $w\notin D$. By a change of variable and the conformal invariance of the Green function, we have

\begin{align}\label{equal}
\int_{\D} |f(z)|^{p - 2} | f '(z)|^2 & \left( \log \frac{1}{|z|}\right)^{\alpha+2}dA(z) \nonumber \\
&=\int_{\D} {|f(z)|^{p - 2} | f '(z)|^2 g_{\D}(0,z)^{\alpha+2}dA(z)} \nonumber \\
&=\int_D {|w|^{p - 2} g_{\D}(0,f^{-1}(w))^{\alpha+2}dA(w)} \nonumber \\
&= \int_D {|w|^{p - 2} g_{D}(f(0),w)^{\alpha+2}dA(w)}   \nonumber \\
&=\int_0^{\infty } {r ^{p - 1}\left( {\int_0^{2\pi } {{g_D}( f(0),re^{i\theta })^{\alpha+2}d\theta } } \right)dr }.
\end{align}
Since $\alpha+2>1$, by Jensen's inequality we derive that, for every $r>0$,
\begin{equation}
\left( \frac{1}{2\pi} \int_0^{2\pi} {g_D(f(0),re^{i\theta})d\theta} \right)^{\alpha+2} \le\frac{1}{2\pi} \int_0^{2\pi} {{g_D}( f(0),re^{i\theta })^{\alpha+2}d\theta}. \nonumber
\end{equation}
This in conjunction with (\ref{finite}) and (\ref{equal}) implies that
\begin{equation}\label{doubleint}
\int_0^{ \infty } {r ^{p - 1}\left( {\int_0^{2\pi } {{g_D}( f(0),re^{i\theta })d\theta } } \right)^{\alpha+2}dr }<\infty.
\end{equation}
Now we state a known relation between harmonic measure and the Green function. For the proof, see \cite[Lemma 2]{Ba1}. If $\Omega \subset \mathbb{C}$ is a simply connected domain and $a\in \Omega$ then, for $r>|a|$, it is true that
\begin{equation}\label{baernid}
\int_r^{\infty}\omega_{\Omega}\left({a,\{|z|>t\}}\cap\partial\Omega\right)\frac{dt}{t}=\frac{1}{2\pi}\int_0^{2\pi} g_{\Omega}({a,re^{i\theta})}d\theta.   
\end{equation}
By \eqref{baernid} and the conformal invariance of the harmonic measure, for every $r>|f(0)|$,
\begin{align}\label{baern}
\int_0^{2\pi} g_D(f(0),re^{i\theta})d\theta &=2\pi\int_r^{\infty} \omega_D (f(0),f(E_t))\frac{dt}{t}=2\pi\int_r^{\infty} \omega_{\D}(0,E_t)\frac{dt}{t} \nonumber \\
&\ge 2\pi \int_r^{2r} \omega_{\D}(0,E_t)\frac{dt}{t} \ge 2\pi \log 2\, \omega_{\D}(0,E_{2r}).
\end{align}
In the last estimate of \eqref{baern}, we have used the fact that $\omega_{\D}(0,E_t)$ is decreasing for $t>0$ because of the maximum principle. Therefore, by (\ref{baern}) we have
\[  \int_{|f(0)|}^{\infty} r^{p-1} \omega_{\D}(0,E_{2r})^{\alpha+2}dr \le C_1 \int_{|f(0)|}^{\infty} r^{p-1} \left( {\int_0^{2\pi } {{g_D}( f(0),re^{i\theta })d\theta } } \right)^{\alpha+2}dr,\]
where $C_1=1/(2\pi\log 2)^{\alpha+2}$. Equivalently, we can write
\[  \int_{2|f(0)|}^{\infty} r^{p-1} \omega_{\D}(0,E_r)^{\alpha+2}dr \le C_2 \int_{|f(0)|}^{\infty} r^{p-1} \left( {\int_0^{2\pi } {{g_D}( f(0),re^{i\theta })d\theta } } \right)^{\alpha+2}dr, \]
where $C_2=2^p/(2\pi\log 2)^{\alpha+2}$. Combining this with (\ref{doubleint}), we obtain
\[\int_{2|f(0)|}^{\infty} r^{p-1} \omega_{\D}(0,E_r)^{\alpha+2}dr<\infty \]
and thus
\[\int_0^{\infty} r^{p-1} \omega_{\D}(0,E_r)^{\alpha+2}dr<\infty.\]
This shows that (3) holds and the proof is complete.
\qed
\subsection{Proof of Corollary \ref{corol}}

By the definition of $A_\alpha^p(\D)$ we can directly derive that it contains any bounded conformal mapping on $\D$, for any $a>-1$ and $p>0$. Hence, it suffices to consider unbounded conformal mappings on $\D$. Let  $f$ be an unbounded conformal mapping on $\D$. By \cite[Lemma 3.3(iii)]{Cor2}, there is a constant $C>0$ such that, for $r>|f(0)|$,
\[\omega_{\D}(0,F_r)\leq \frac{C}{\sqrt{r}}.\] 
Therefore, we have
\[\int_{|f(0)|}^{\infty}r^{p-1}\omega_{\D}(0,F_r)^{\alpha+2}dr\leq C^{\alpha+2}\int_{|f(0)|}^{\infty}r^{p-\frac{\alpha}{2}-2}dr<\infty,\]
for every $p>0$ and $\alpha>-1$ such that $\frac{p}{\alpha+2}\in(0,\frac{1}{2})$. Thus, Theorem \ref{main} implies that $f\in A_\alpha^p(\D)$ for every $p>0$ and $\alpha>-1$ such that $\frac{p}{\alpha+2}\in(0,\frac{1}{2})$. 

Now, we show that the inequality $\frac{p}{\alpha+2}<\frac{1}{2}$ is sharp. Consider the Koebe function $K(z)=\frac{z}{(1-z)^2}$. Using the conformal invariance of the harmonic measure, we deduce that for $r>1/4$,
\[\omega_{\D}(0,F_r)=1-\frac{2}{\pi}\arctan\left(\sqrt{r}\left(1-\frac{1}{4r}\right)\right)\]
and hence, by elementary calculus, if $r$ is large enough, there is a constant $C>0$ such that
\[\omega_{\D}(0,F_r)\geq \frac{C}{\sqrt{r}}.\]
Therefore, for any $\delta$ sufficiently large, if $\frac{p}{\alpha+2}=\frac{1}{2}$, then
\[\int_{\delta}^{\infty}r^{p-1}\omega_{\D}(0,F_r)^{\alpha+2}dr\geq C^{\alpha+2}\int_{\delta}^{\infty}r^{p-\frac{\alpha}{2}-2}dr=C^{\alpha+2} \int_{\delta}^{\infty}\frac{1}{r}dr=\infty.\]
By Theorem \ref{main} it follows that $K\notin A_\alpha^p(\D)$ and the proof is complete.
\qed
\subsection{Proof of Corollary \ref{corb}}

We first prove (1). Let $f$ be a conformal mapping on $\D$ and, for $r>0$, let $F_r$ be as in the statement of Corollary \ref{corb}. Suppose $\beta,\gamma \geq 0$ satisfy $\beta+\gamma=\alpha+2$ and
\[
\int_{0}^{\infty}r^{p-1}e^{-\beta d_{\D}(0,F_r)}\omega_{\D}(0,F_r)^{\gamma}dr<\infty.
\]
By \eqref{bn} and since $\beta+\gamma=\alpha+2$, we immediately find that
\[
\int_{0}^{\infty} r^{p-1}e^{-(\alpha+2)d_{\D}\left(0, F_r\right)}dr<\infty
\]
and thus, by Theorem \ref{bkk}, $f\in A_\alpha^p(\D)$.

We now proceed with the proof of (2). Suppose $f\in A_\alpha^p(\D)$ is a conformal mapping and let $\beta ,\gamma>0$ satisfy $\beta+\gamma=\alpha+2$. Note that the cases $(\beta,\gamma)=(0,\alpha+2)$ and $(\beta,\gamma)=(\alpha+2,0)$ are covered by Theorems  \ref{main} and \ref{bkk}, respectively. If $t=\frac{\alpha+2}{\beta}$ and $s=\frac{\alpha+2}{\gamma}$, then $\frac{1}{t}+\frac{1}{s}=1$. By H{\"o}lder's inequality, Theorem \ref{bkk} and Theorem \ref{main}, we have
\begin{align*}
\int_{0}^{\infty}&r^{p-1}e^{-\beta d_{\D}(0,F_r)}\omega_{\D}(0,F_r)^{\gamma}dr\\
&=\int_{0}^{\infty}r^{(p-1)/t}e^{-\beta d_{\D}(0,F_r)}r^{(p-1)/s}\omega_{\D}(0,F_r)^{\gamma}dr\\
&\leq \left(\int_{0}^{\infty}r^{p-1}e^{-(\alpha+2)d_{\D}(0,F_r)}dr\right)^{1/t}\left(\int_{0}^{\infty}r^{p-1}\omega_{\D}(0,F_r)^{\alpha+2}dr\right)^{1/s} \\
&<\infty.
\end{align*}
and the proof is complete.
\qed
\subsection{Proof of Theorem \ref{geom}}

For the proof of Theorem \ref{geom}, we need the following lemma. Suppose $f$ is a conformal mapping on $\D$ and, for $r>0$, let $F_r$ and $E_r$ be as defined in Section \ref{intro}. We also set $U_r=\{z\in\D:\ |f(z)|>r\}$. Note that for each $r>0$, the open set $U_r$ consists of (at most) countably many components each having the property that its boundary consists of a component of $F_r$ and a component of $E_r$. See Figure \ref{fig}.
\begin{lemma}\label{LA}
There is a universal constant $C>0$ such that if $r>0$ is sufficiently large, then
\[
\area(U_r)\geq C\omega_{\D}(0,E_{2r})^2.
\]

\end{lemma}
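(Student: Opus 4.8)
The plan is to bypass harmonic measure almost entirely by using the elementary identity $\omega_{\D}(0,E_{2r})=|E_{2r}|/(2\pi)$, valid because harmonic measure at the center of the disk is normalized arc length; here $|E_{2r}|$ denotes the total length of the (countably many) arcs comprising $E_{2r}$. Thus it suffices to prove a purely Euclidean inequality, namely $\area(U_r)\geq C'|E_{2r}|^2$ for all sufficiently large $r$ and some universal $C'$, and then translate back.

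The first step is a length--area estimate. Writing the area of $U_r$ as an integral over the image via the change of variables $w=f(z)$, we get $\area(U_r)=\int_{D\cap\{|w|>r\}}|(f^{-1})'(w)|^2\,dA(w)$, where $D=f(\D)$. Restricting to the subregion $r<|w|<2r$ and passing to polar coordinates $w=\rho e^{i\theta}$, the arclength of the level set $F_\rho$ equals $\lambda(\rho)=\int_{\theta}|(f^{-1})'(\rho e^{i\theta})|\,\rho\,d\theta$, where the integral runs over the angular slice $\{\theta:\rho e^{i\theta}\in D\}$, which has measure at most $2\pi$. An application of the Cauchy--Schwarz inequality in $\theta$ then gives, for a.e.\ $\rho$, the bound $\int_{\theta}|(f^{-1})'|^2\rho\,d\theta\geq \lambda(\rho)^2/(2\pi\rho)$. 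Integrating over $\rho\in(r,2r)$ and using $\rho\le 2r$ yields $\area(U_r)\geq \frac{1}{4\pi r}\int_r^{2r}\lambda(\rho)^2\,d\rho$.

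The second step is a geometric lower bound on $\lambda(\rho)$ for $\rho\in(r,2r)$. Each component of $F_\rho$ is an analytic crosscut of $\D$ whose two endpoints are exactly the endpoints of the corresponding component arc $J$ of $E_\rho$ (this is the boundary structure of the components of $U_\rho$ recorded in the discussion preceding the lemma). Hence the length of that crosscut is at least the chord joining the endpoints of $J$, which equals $2\sin(|J|/2)$. Summing over components and using $E_{2r}\subset E_\rho$ for $\rho\le 2r$, we obtain $\lambda(\rho)\geq\sum_J 2\sin(|J|/2)$, where $\sum_J|J|=|E_\rho|\geq|E_{2r}|$. Feeding this into the length--area bound and tracking constants gives $\area(U_r)\geq \frac{1}{\pi^3}|E_{2r}|^2=\frac{4}{\pi}\,\omega_{\D}(0,E_{2r})^2$, a universal constant as claimed.

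The main obstacle is the passage from $\sum_J 2\sin(|J|/2)$ back to $\sum_J|J|$: the chord underestimates the arc, and for an arc close to the full circle the two are wildly different. This is exactly where the hypothesis that $r$ be sufficiently large enters. Since $f$ has finite radial limits a.e.\ on $\partial\D$, the total length $|E_\rho|\le |E_r|$ tends to $0$ as $r\to\infty$, so for $r$ large every component arc $J$ of $E_\rho$ with $\rho>r$ satisfies $|J|\le\pi$, whence $2\sin(|J|/2)\geq \frac{2}{\pi}|J|$ and the sum becomes comparable to $|E_\rho|\geq|E_{2r}|$. I would additionally justify the a.e.-$\rho$ regularity of the level sets used in the Cauchy--Schwarz step and the crosscut/arc correspondence, but both are standard consequences of the univalence of $f$ and the structure of $F_\rho$ already invoked in the Introduction.
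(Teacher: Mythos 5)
Your proposal is correct and follows essentially the same route as the paper's proof: the same change of variables and Cauchy--Schwarz length--area estimate on the level curves $F_\rho$, the same chord bound $2\sin(|J|/2)$ for the crosscuts, and the same small-arc comparison valid for large $r$ (your identity $\omega_{\D}(0,E_{2r})=|E_{2r}|/(2\pi)$ is just the explicit form of harmonic measure at the center of $\D$, which the paper uses implicitly when writing the chord length as $2\sin(\pi\omega_{\D}(0,K))$). The only differences are minor bookkeeping---you bound $1/\rho\geq 1/(2r)$ on $(r,2r)$ where the paper integrates $dt/t$ and invokes monotonicity of $\omega_{\D}(0,E_t)$---together with your explicit justification, via $|E_r|\to 0$ as $r\to\infty$, of the assertion that the boundary arcs are short for large $r$, a point the paper states without proof.
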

\begin{figure}
	\includegraphics[width=0.5\linewidth]{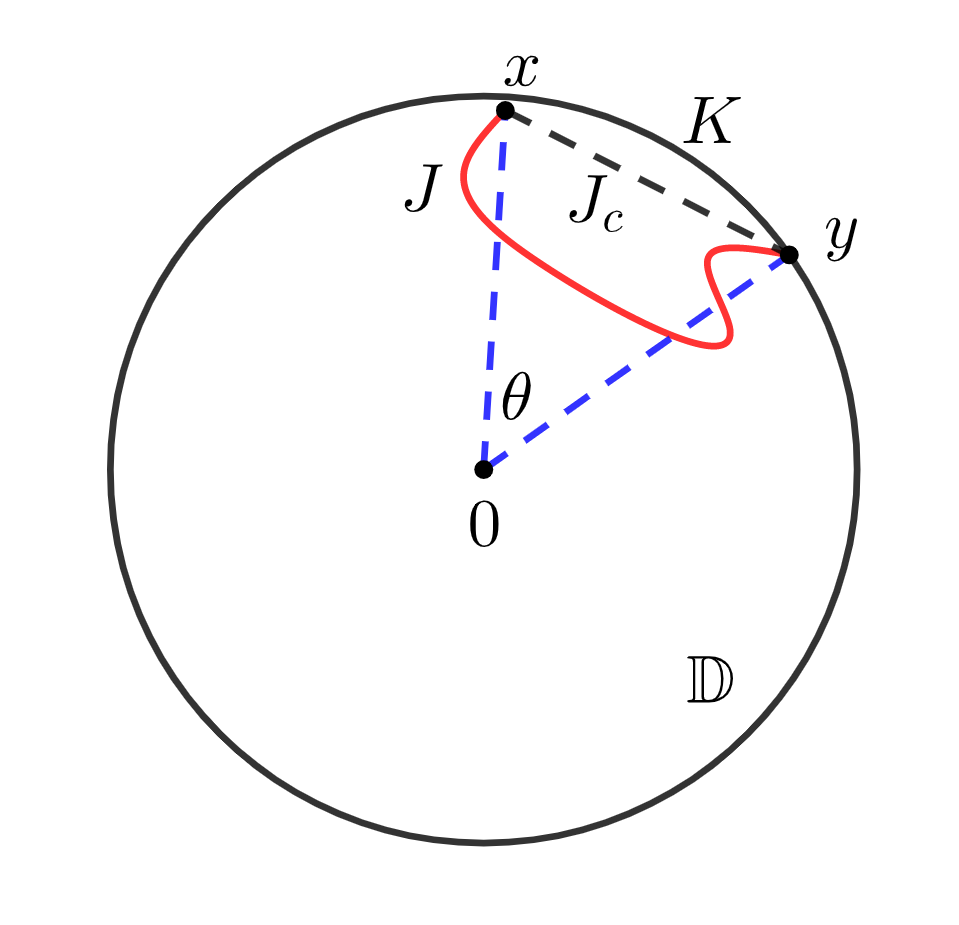}
	\caption{The red arc $J$ is a component of $F_r$ with endpoints $x,y$. Its length is at least as large as the length of the dotted chord $J_c$. The angle $\theta$ is equal to $2\pi\omega_{\D}(0,K)$, where $K$ is the component of $E_r$ with endpoints $x,y$.}	
\end{figure}
\proof
Let $D=f(\D)$ and for $r>0$, set $I_r=\{\theta\in [0,2\pi):\ re^{i\theta}\in D\}$. Using a change of variable, we have
\begin{align*}
\area(U_r)=\int_{U_r}dA(z)&=\int_{D\cap\{|z|>r\}}\frac{dA(w)}{|f'\left(f^{-1}(w)\right)|^2}\\
&= \int_{r}^{\infty}\int_{I_t}\frac{t}{|f'\left(f^{-1}(te^{i\theta})\right)|^2}d\theta dt\\
&= \int_{r}^{\infty}\frac{1}{t}\int_{I_t}\frac{t^2}{|f'\left(f^{-1}(te^{i\theta})\right)|^2}d\theta dt.
\end{align*}
Note that for $t>0$, we may parametrize the curve $F_t$ as $F_t(\theta)=f^{-1}\left(te^{i\theta}\right)$, for $\theta\in I_t$. Using the Cauchy-Schwarz inequality, we infer that
\begin{align*}
\text{length}(F_t)^2 &=\left(\int_{F_t}|dz|\right)^2=\left(\int_{I_t}\frac{t}{|f'\left(f^{-1}(te^{i\theta})\right)|}d\theta\right)^2\\
&\leq \text{length}\left(I_t\right)\int_{I_t}\frac{t^2}{|f'\left(f^{-1}(te^{i\theta})\right)|^2}d\theta\\
&\leq 2\pi\int_{I_t}\frac{t^2}{|f'\left(f^{-1}(te^{i\theta})\right)|^2}d\theta.
\end{align*}
Therefore, by the calculation above, we obtain
\[
\area(U_r)\geq \frac{1}{2\pi}\int_{r}^{\infty}\frac{\text{length}(F_t)^2}{t}dt.
\]
If  $r$ is large and $J$ denotes any component of $F_r$, then $J$ is an analytic arc in $\D$ having two distinct endpoints, $x,y$, on $\partial\D$. Let $J_c$ denote the chord of the disk connecting $x,y$. If $r$ is sufficiently large, then the arc of the circle with endpoints $x,y$ and having smaller length, is a component $K$ of $E_r$. See Figure 2. By elementary geometry and since $\frac{\sin t}{t}\ge \frac{1}{2}$, for $t>0$ sufficiently small,
\[
\text{length}(J)\geq\text{length}(J_c)=2\sin \left(\pi\omega_{\D}(0,K)\right) \ge \pi\omega_{\D}(0,K).
\]
Taking the sum over all the components of $F_r$ and $E_r$ we deduce that
\[
\text{length}(F_t)\geq \pi\omega_{\D}(0,E_t),
\]
for all $t>r$, provided that $r$ is sufficiently large. Hence, 
\begin{align*}
\area(U_r)&\geq \frac{\pi}{2}\int_{r}^{\infty}\frac{\omega_{\D}(0,E_t)^2}{t}dt
\geq\frac{\pi}{2}\int_{r}^{2r}\frac{\omega_{\D}(0,E_t)^2}{t}dt\\
&\geq \frac{\pi}{2}\omega_{\D}(0,E_{2r})^2\int_{r}^{2r}\frac{1}{t}dt
=\frac{\pi\log 2}{2}\omega_{\D}(0,E_{2r})^2,
\end{align*}
for $r$ sufficiently large and the proof is complete.
\qed\\[0.5cm]
\begin{remark}
With a slight modification of the proof of Lemma \ref{LA}, it is possible to show that for each $M>1$, there exists a constant $C=C(M)>0$ which depends only on $M$ such that for all sufficiently large $r$, 
\[
\area(U_r)\geq C\omega_{\D}(0,E_{Mr})^2.
\]
However, for our purposes, the case $M=2$ suffices.
\end{remark}
We can now prove Theorem \ref{geom}.

\proof Let $p>0$, $\alpha \ge -1$ and suppose $f\in A_\alpha^p(\D)$ is a conformal mapping. There is a universal constant $C>0$ such that if $r>0$, then
\begin{equation}\label{marsmith}
\area(U_r)\leq Ce^{-d_{\D}(0,F_r)}\omega_{\D}(0,F_r).
\end{equation}
For a proof of this fact, see \cite[section 2.1]{Mar}. Applying Corollary \ref{corb} (2) with $\beta=\gamma=\frac{\alpha+2}{2}$ we have
\[
\int_{0}^{\infty}r^{p-1}e^{-\frac{\alpha+2}{2}d_{\D}(0,F_r)}\omega_{\D}(0,F_r)^\frac{\alpha+2}{2}dr<\infty.
\]
This in combination with \eqref{marsmith} implies that
\begin{equation}
\int_{0}^{\infty}r^{p-1}\area(U_r)^{\frac{\alpha+2}{2}}dr<\infty. \nonumber
\end{equation}

Conversely, let $p>0$, $\alpha \ge-1$ and assume that $f$ is a conformal mapping on $\D$ satisfying 
\begin{equation}\label{area}
\int_{0}^{\infty}r^{p-1}\area(U_r)^{\frac{\alpha+2}{2}}dr<\infty. 
\end{equation}
By Lemma \ref{LA}, there is some universal constant $C>0$ such that
\begin{equation}\label{el}
\area(U_r)\geq C\omega_{\D}(0,E_{2r})^2,
\end{equation}
for all $r$ sufficiently large. Let $\delta$ be large enough so that if $r>\delta$, then \eqref{el} holds. Then, by a change of variable and \eqref{el},
\begin{align*}
\int_{\delta}^{\infty}r^{p-1}\area(U_r)^{\frac{\alpha+2}{2}}dr &\geq C^{\frac{\alpha+2}{2}}\int_{\delta}^{\infty}r^{p-1}\omega_{\D}(0,E_{2r})^{\alpha+2}dr \\
&= C_1\int_{2\delta}^{\infty}r^{p-1}\omega_{\D}(0,E_r)^{\alpha+2}dr,
\end{align*}
where $C_1={C^{\frac{\alpha+2}{2}}}/{2^p}$. This in combination with (\ref{area}) shows that
\[
\int_{0}^{\infty}r^{p-1}\omega_{\D}(0,E_r)^{\alpha+2}dr<\infty,
\]
and thus by Theorem \ref{main}, $f\in A_\alpha^p(\D)$.
\qed

\subsection{Proof of Corollary \ref{l2}}

Let $f\in A_{\alpha}^p(\D)$ be conformal. Suppose that $\Phi (r)=\omega_{\D}(0,F_r)$. By Theorem \ref{main} we have  
	\[\int_{0}^{\infty}r^{p-1}\omega_{\D}(0,F_r)^{\alpha+2}dr<\infty.
	\]
	Since $\omega_{\D}(0,F_r)$ is a decreasing function of $r$, it follows that for $R>0$,
	\begin{align}
	\int_{0}^{\infty}r^{p-1}\omega_{\D}(0,F_r)^{\alpha+2}dr & \geq\int_{0}^{R}r^{p-1}\omega_{\D}(0,F_r)^{\alpha+2}dr \nonumber\\
	& \geq\omega_{\D}(0,F_R)^{\alpha+2}\int_{0}^{R}r^{p-1}dr=\frac{R^p}{p}\omega_{\D}(0,F_R)^{\alpha+2}. \nonumber
	\end{align}
	Combining the results above we infer that, for every $R>0$,
	\[\omega_{\D}(0,F_R)\leq CR^{-\frac{p}{\alpha+2}},\]
	where 
	\[C=\left(p\int_{0}^{\infty}r^{p-1}\omega_{\D}(0,F_r)^{\alpha+2}dr\right)^{\frac{1}{\alpha+2}}\]
	is a constant which depends only on $f,p$, and $\alpha$. This completes the proof of the first part of the theorem. 
	
	Now, suppose there are $p'>0$, $\alpha '\geq -1$, $C>0$, and $r_0>0$ such that 
	\[\omega_{\D}(0,F_r)\leq Cr^{-\frac{p'}{\alpha'+2}}\]
	for every $r>r_0$. If $\alpha\geq -1$ and $p>0$ satisfy $\frac{p}{\alpha+2}<\frac{p'}{\alpha'+2}$, then it follows that
	\[\int_{r_0}^{\infty}r^{p-1}\omega_{\D}(0,F_r)^{\alpha+2}dr\le C^{\alpha+2}\int_{r_0}^{\infty}r^{p-1-\frac{p'}{\alpha'+2}(\alpha+2)}dr<\infty.\]
	Therefore, Theorem \ref{main} implies that $f\in A_{\alpha}^p(\D)$ for all $p>0$ and $\alpha \geq -1$ such that $\frac{p}{\alpha+2}\in (0,\frac{p'}{\alpha'+2})$. 

Since $\omega_{\D}(0,E_r)$, $e^{-d_{\D}(0,F_r)}$ and $\left(\area (U_r)\right)^{1/2}$ are decreasing functions of $r$, by Theorems \ref{main}, \ref{bkk} and \ref{geom}, respectively, we see that the proof above also works if $\Phi (r)$ is equal to $\omega_{\D}(0,E_r)$, $e^{-d_{\D}(0,F_r)}$ or $\left(\area (U_r)\right)^{1/2}$ and thus the proof is complete.
\qed

\subsection{Proof of Corollary \ref{bnd}}

Let $p>0$, $\alpha>-1$ be such that $\frac{p}{\alpha+2}<b(f)$. Then we may find $\epsilon>0$ so that $\frac{p}{\alpha+2}+\epsilon<b(f)$. Thus there exist $p'>0$ and $\alpha'>-1$ satisfying $\frac{p}{\alpha+2}+\epsilon<\frac{p'}{\alpha'+2}$ and $f\in A_{\alpha'}^{p'}$. By the first part of Corollary \ref{l2}, \[\Phi(r) \leq Cr^{-\frac{p'}{\alpha'+2}},\] for all $r>0$ and some constant $C>0$. Therefore, 
\[\Phi(r)\leq Cr^{-\frac{p+\epsilon(\alpha+2)}{\alpha+2}},\]for all $r>1$. By the second part of Corollary \ref{l2}, it follows that $f\in A_{\beta}^q(\D)$, for all $q>0$, $\beta>-1$ such that $\frac{q}{\beta+2}<\frac{p+\epsilon(\alpha+2)}{\alpha+2}$. Choosing $\beta=\alpha$ and $q=p$ yields $f\in A_{\alpha}^p(\D)$. If $\frac{p}{\alpha+2}>b(f)$, then we clearly have $f\notin A_{\alpha}^p(\D)$. 
\qed

\subsection{Proof of Theorem \ref{benu}}

First, we prove (1). If $f\in A_\alpha^p(\D)$, then by Corollary \ref{l2} there is a constant $C>0$ such that
	\[\omega_{\D}(0,F_r)\le Cr^{-\frac{p}{\alpha+2}},\]
	for every $r>0$. Equivalently, for $r>1$,
	\[\frac{\log \omega_{\D}(0,F_r)^{-1}}{\log r}\ge \frac{\log C^{-1}}{\log r}+ \frac{p}{\alpha+2}.\]
	Thus, taking limits as $r\to \infty$, we deduce that
	\[\liminf_{r\to \infty}\frac{\log\omega_{\D}(0,F_r)^{-1}}{\log r}\ge\frac{p}{\alpha+2}.\]
	This holds for any $p>0$ and $\alpha>-1$ for which $f\in A_\alpha^p(\D)$ and hence
	\begin{align}\label{miaf}
	\liminf_{r\to \infty}\frac{\log\omega_{\D}(0,F_r)^{-1}}{\log r}\ge b(f).
	\end{align}
	Now, we set 
	\[I:=\liminf_{r\to \infty}\frac{\log\omega_{\D}(0,F_r)^{-1}}{\log r}.\]
	If $\frac{p}{a+2}<I$, then there exist $\epsilon>0$ and $r_0>0$ such that for every $r>r_0$,
	\[
	\frac{p}{a+2}+\epsilon\le\frac{\log\omega_{\D}(0,F_r)^{-1}}{\log r}
	\]
	or, equivalently,
	\[r^{p-1}\omega_{\D}(0,F_r)^{\alpha+2}\leq r^{-1-\epsilon(\alpha+2)}.\]
	Therefore, it follows that
	\[
	\int_{r_0}^{\infty}r^{p-1}\omega_{\D}(0,F_r)^{\alpha+2}dr \le \int_{r_0}^{\infty}r^{-1-\epsilon(\alpha+2)}dr <\infty.
	\]
	Theorem \ref{main} implies that $f\in A_{\alpha}^p(\D)$. This shows that the 	   interval $(0,I)$ is contained in the set
	\[
	\left\{\frac{p}{\alpha+2}:\ f\in A_{\alpha}^p(\D)\right\}
	\]	
and hence
	\begin{align}\nonumber
	\liminf_{r\to \infty}\frac{\log\omega_{\D}(0,F_r)^{-1}}{\log r}\le b(f).
	\end{align}
	This in conjunction with (\ref{miaf}) gives the desired result.
	
	For the proof of (2), (3) and (4), it suffices to replace $\omega_{\D}(0,F_r)$ with $\omega_{\D}(0,E_r)$, $e^{-d_{\D}(0,F_r)}$ and $\left(\area (U_r)\right)^{1/2}$, use Theorems \ref{main}, \ref{bkk} and \ref{geom}, respectively, and repeat the proof of (1).
\qed

\subsection{Proof of Corollary \ref{inclu}}

Let $f\in A_{\alpha}^p(\D)\cap\mathcal{U}$ for some $p>0$ and $\alpha>-1$. Then $\frac{p}{\alpha+2}\leq b(f)$. By (\ref{habe}) we infer that $\frac{p}{\alpha+2}\leq h(f)$ and hence $f\in H^{q}(\D)$ for every $q\in(0,\frac{p}{\alpha+2})$.

Now, let $f \in H^{q}(\D)\cap\mathcal{U}$ for some $q>0$. By \eqref{poco1} it follows that
\[\int_{0}^{\infty}r^{q-1}\omega_{\D}(0,F_r)dr<\infty.\]
Moreover, by Corollary \ref{l2} (or \cite[Lemma 1]{Ess}), there is a constant $C>0$ such that 
\[\omega_{\D}(0,F_r)\le Cr^{-q},\]
for every $r>0$. Therefore, if $p>0$ and $\alpha>-1$ satisfy $\frac{p}{\alpha+2}\leq q$, then
\begin{align*}
\int_{1}^{\infty}r^{p-1}\omega_{\D}(0,F_r)^{\alpha+2}dr&=\int_{1}^{\infty}r^{p-1}\omega_{\D}(0,F_r)^{\alpha+1}\omega_{\D}(0,F_r)dr\\
&\le C^{\alpha+1}\int_{1}^{\infty}r^{p-1}r^{-q(\alpha+1)}\omega_{\D}(0,F_r)dr \\
&\le C^{\alpha+1}\int_{1}^{\infty}r^{q-1}\omega_{\D}(0,F_r)dr<\infty.
\end{align*}
So, Theorem \ref{main} implies that $f\in A_{\alpha}^p(\D)\cap\mathcal{U}$ for any $p>0$ and $\alpha>-1$ such that $\frac{p}{\alpha+2}\in (0,q]$.
\qed

\subsection{Proof of Corollary \ref{g1}}
Let $p>0$, $\alpha\geq -1$ and suppose that $f\in A_{\alpha}^p(\D)$ is conformal. We first prove (1). Note that $M_f$ is an increasing function of $r$. Hence, for $r\in (0,1)$,
\[
\int_{r}^{1}(1-t)^{\alpha+1}M(t)^pdt\geq M(r)^p\int_{r}^{1}(1-t)^{\alpha+1}dt=\frac{1}{{\alpha+2}}{(1-r)^{\alpha+2}M(r)^p}.
\]
By Theorem \ref{bgp}, we have 
\[
\lim_{r\to 1} \int_{r}^{1}(1-t)^{\alpha+1}M(t)^pdt= 0.
\]
Combining the above, we deduce that 
\[\lim_{r\to 1}(1-r)^{\alpha+2}M(r)^{p}=0\]
or, equivalently,
\[
\lim_{r\to 1}(1-r)M(r)^{\frac{p}{\alpha+2}}=0\]
which completes the proof of (1).

We now prove (2). For $r\in(0,1)$, let $\psi(r)=\area\left(f(D(0,r))\right)$, where $D(0,r)$ is the disk centered at $0$ of radius $r$. By a result which appears in \cite[(2.9) on p. 132]{Per}, we have that 
\begin{equation}\label{geom29}
\int_{0}^{1}(1-t)^{\alpha+1}\psi(t)^{\frac{p}{2}}dt<\infty.
\end{equation}
Since $\psi$ is an increasing function of $r$, repeating the argument above and using \eqref{geom29} instead of Theorem \ref{bgp} we get (2).
\qed
\subsection{Proof of Theorem \ref{g2}}
Let $f$ be a conformal mapping on $\D$. We first prove (1). Set 
\[
L_f:=\liminf_{r\to 1}\frac{-\log(1-r)}{\log M(r)}.
\]
Let $p>0$, $\alpha >-1$ and assume that $f\in A_{\alpha}^p(\D)$. By (1) of Corollary \ref{g1}, we have 
\[
\lim_{r\to 1}(1-r)M(r)^{\frac{p}{\alpha+2}}=0,
\]
which in turn implies that, for $r$ sufficiently close to $1$,
\[(1-r)M(r)^{\frac{p}{\alpha+2}}\le 1 \]
and thus
\[
\frac{p}{\alpha+2}\leq \liminf_{r\to 1}\frac{-\log(1-r)}{\log M(r)}.
\]
Taking the supremum over all $\frac{p}{\alpha+2}$ for which $f\in A_{\alpha}^p$ yields
\begin{equation}\label{equ1}
b(f)\leq L_f.
\end{equation}
Note that by the Koebe distortion theorem, or by Corollary \ref{corol}, $L_f\geq 1/2$. Choose any $\mu\in(0,L_f)$. Then $\mu<\frac{-\log(1-r)}{\log M(r)}$, for all $r$ sufficiently close to $1$ and thus
\[
M(r)^{\mu}<\frac{1}{1-r},
\]
for $r$ close to $1$. This implies that, if $\delta \in (0,1)$ is sufficiently close to $1$, then
\[
\int_{\delta}^{1}M(r)^{\mu(1-\epsilon)}dr \le \int_{\delta}^{1} (1-r)^{\epsilon-1}<\infty,
\]
for all $\epsilon\in (0,1)$. Therefore, by Theorem \ref{bgp}, $f\in H^{\mu(1-\epsilon)}(\D)$ and hence
\[
h(f)\geq \mu(1-\epsilon),
\]
for all $\epsilon\in (0,1)$. Letting $\epsilon\to 0$ and using the fact that $h(f)=b(f)$ we infer that
\[
b(f)\geq \mu.
\]
Finally, letting $\mu\to L_f$ we obtain
\begin{equation}\label{equ2}
b(f)\geq L_f.
\end{equation}
By \eqref{equ1} and \eqref{equ2}, we have $b(f)=L_f$ and the proof of (1) is complete.

We now proceed with the proof of (3). Set
\[S_f:=\sup\{\lambda>0:\ \lim_{r\to 1}(1-r)M(r)^{\lambda}=0\}.\]
Let $p>0$, $\alpha>-1$ and assume that $f\in A_{\alpha}^p(\D)$ is conformal. Then by Corollary \ref{g1},
\[
\lim_{r\to 1}(1-r)M(r)^{\frac{p}{\alpha+2}}=0
\]
and thus $\frac{p}{\alpha+2}\leq S_f$. Taking the supremum, we have
\begin{equation}\label{equ3}
b(f)\leq S_f.
\end{equation}
We would now like to show that equality holds in \eqref{equ3}. Suppose this is not the case. Then there is an $\epsilon>0$ so that $b(f)+\epsilon<S_f$. By the definition of $S_f$, it follows that 
\[
\lim_{r\to 1}(1-r)M(r)^{b(f)+\epsilon}=0,
\]
which in turn implies that 
\[
M(r)^{b(f)+\frac{\epsilon}{2}}\leq (1-r)^{-\frac{b(f)+\frac{\epsilon}{2}}{b(f)+\epsilon}},
\]
for $r$ close to 1. Therefore,
if $\delta \in (0,1)$ is sufficiently close to $1$, then
\[
\int_{\delta}^{1}M(r)^{b(f)+\frac{\epsilon}{2}}dr<\infty.
\]
By Theorem \ref{bgp}, we deduce that $f\in H(\D)^{b(f)+\frac{\epsilon}{2}}$. Since $h(f)=b(f)$, this is evidently a contradiction. Therefore, $b(f)=S_f$.

The proofs of (2) and (4) are carried out exactly in the same manner, the only differences being that we use (2), instead of (1), of Corollary \ref{g1} and \eqref{geom29} instead of \eqref{growth}.
\qed
\section{Example}\label{exa}
Let $\mathcal{U}$ denote the class of all conformal mappings defined on $\D$. By the second part of Corollary \ref{inclu}, 
\begin{equation}\label{sp}
H^q(\D)\cap\mathcal{U}\subset A_{\alpha}^p(\D)\cap\mathcal{U},
\end{equation}
for any $p>0$ and $\alpha>-1$ satisfying $0<\frac{p}{\alpha+2}\leq q$.  For $\alpha=0$ and $p=2q$, we obtain the inclusion $H^q(\D)\cap\mathcal{U}\subset A^{2q}(\D)\cap\mathcal{U}$, for any $q>0$. A stronger statement is actually true, namely $H^q(\D)\subset A^{2q}(\D)$, and it follows from a theorem of Hardy and Littlewood \cite{HL}. The authors in \cite[p. 852]{Ba2} prove by means of explicit functions that, for any $q>0$, there exists a conformal mapping in $A^{2q}(\D)\setminus H^q(\D)$. Next, we exhibit a conformal mapping $f$ having the following properties:
\begin{enumerate}
\item $f\in\mathcal{U}$,
\item $f\in A_1^3(\D)$,
\item $f\notin A^2_0(\D)$,
\item $f\notin H^1(\D)$.
\end{enumerate}
Properties (2) and (4) show that the inclusion \eqref{sp} is, in general, strict. Moreover, properties (2) and (3) prove that if we consider $\alpha,\alpha'>-1$ and $p,p'>0$ such that $\frac{p}{\alpha+2}=\frac{p'}{\alpha'+2}$, then the equality 
\[A_{\alpha}^p(\D)\cap\mathcal{U}=A_{\alpha'}^{p'}(\D)\cap\mathcal{U},\]
is not always true.

We note that our function is very similar to the functions considered in \cite{Ba2} and the proof is along the same lines.

\begin{proof}
Consider the function
\[
f(z)=\frac{1}{(1-z)\sqrt{\log\frac{2e}{1-z}}},\ z\in\D.
\]
The mapping $\frac{2e}{1-z}$ maps $\D$ conformally onto $\{\Re z>e\}$ and we can define $\log z$ to be analytic there by choosing the argument in $(-\pi/2,\pi/2)$. The image, under this logarithm, of $\{\Re z>e\}$ is a simply connected subregion of $\{\Re z>1\}$ and thus by choosing the argument for the square root to be in $(-\pi/2,\pi/2)$ again, we see that $f$ is well defined and analytic in $\D$.

First, we show that 
\begin{equation}\label{maxim}
M_f(r)=\frac{1}{(1-r)\sqrt{\log\frac{2e}{1-r}}}.
\end{equation}
Note that the function $\psi(x)=x\sqrt{\log\frac{2e}{x}}$ is increasing for $x\in(0,2)$. Therefore, for $|z|=r\in(0,1)$,
\[
\bigg\lvert(1-z)\sqrt{\log\frac{2e}{1-z}}\bigg\rvert\geq  |1-z|\sqrt{\log\frac{2e}{|1-z|}}\geq (1-r)\sqrt{\log\frac{2e}{1-r}}.
\]
This implies that
\[
M_f(r)\leq \frac{1}{(1-r)\sqrt{\log\frac{2e}{1-r}}}.
\]
However, the right hand side of this estimate equals $f(r)$ and thus we have \eqref{maxim}. Suppose for a moment that (1) holds. If we choose $p>0$ and $\alpha \ge -1$ such that $\alpha+2-p=0$, then,  by \eqref{maxim},
\[\int_{0}^{1} (1-r)^{\alpha+1}M_f^p(r) dr=\int_0^1 {\frac{1}{(1-r)\left( \log\frac{2e}{1-r}\right)^{{p}/{2}}}dr}=\int_{\log (2e)}^{\infty} \frac{1}{y^{p/2}}dy,\]
where we made the change of variable $\log\frac{2e}{1-r}=y$. This and Theorem \ref{bgp} imply that (2), (3), and (4) hold as well. Therefore, it only remains to prove (1). To do this, we need the following lemma.
\begin{lemma}\label{lemap}
For $z\in\D$, 
\[
\Re\left[\left(\log\frac{2e}{1-z}\right)^{-1/2}-\frac{1}{2}\left(\log\frac{2e}{1-z}\right)^{-3/2}\right]>0.
\]
\end{lemma}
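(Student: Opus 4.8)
The plan is to reduce the inequality to a statement about the image region of the single analytic function $L(z):=\log\frac{2e}{1-z}$, since the bracketed expression is exactly $L^{-1/2}-\frac12 L^{-3/2}$ (this is what makes $f$ close-to-convex, but that use comes later). First I would pin down where $L$ lives. As $z$ runs over $\D$, $1-z$ runs over the disk $\{|\zeta-1|<1\}$, and $\zeta\mapsto 1/\zeta$ carries that disk onto $\{\Re>1/2\}$, so $w:=\frac{2e}{1-z}$ runs over $\{\Re w>e\}$. For such $w$ one has $|w|\ge\Re w>e$, hence $\Re L=\log|w|>1$; and since $w$ lies in the right half-plane, the chosen branch gives $\Im L=\arg w\in(-\pi/2,\pi/2)$. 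Thus for every $z\in\D$,
\[
\Re L>1,\qquad |\Im L|<\frac{\pi}{2}.
\]

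From these two crude facts I would extract the two quantities that drive the computation. Writing $\phi:=\arg L$, which is well defined in $(-\pi/2,\pi/2)$ because $\Re L>0$, I get $|L|\ge\Re L>1$ and
\[
|\tan\phi|=\frac{|\Im L|}{\Re L}<\frac{\pi}{2}<\sqrt{3}=\tan\frac{\pi}{3},
\]
so that $|\phi|<\pi/3$. In polar form, with the principal branch, $L^{-1/2}=|L|^{-1/2}e^{-i\phi/2}$ and $L^{-3/2}=|L|^{-3/2}e^{-3i\phi/2}$, hence taking real parts,
\[
\Re\!\left[L^{-1/2}-\tfrac12 L^{-3/2}\right]
=|L|^{-3/2}\left(|L|\cos\tfrac{\phi}{2}-\tfrac12\cos\tfrac{3\phi}{2}\right).
\]

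It then suffices to prove the bracket is positive. Since $|\phi|<\pi/3$ we have $|\phi/2|<\pi/6$ and $|3\phi/2|<\pi/2$, so both cosines are positive, and because cosine is decreasing on $[0,\pi/2)$ we also have $\cos\frac{\phi}{2}\ge\cos\frac{3\phi}{2}$. Combining this with $|L|>1$ gives
\[
|L|\cos\tfrac{\phi}{2}-\tfrac12\cos\tfrac{3\phi}{2}
>\cos\tfrac{\phi}{2}-\tfrac12\cos\tfrac{3\phi}{2}
\ge\cos\tfrac{\phi}{2}-\tfrac12\cos\tfrac{\phi}{2}
=\tfrac12\cos\tfrac{\phi}{2}>0,
\]
which is the claim.

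The computation itself is routine; the only step needing care is the control of $\arg L$, which is where I expect the one genuine obstacle to lie. The decisive input is that the estimates $\Re L>1$ and $|\Im L|<\pi/2$ already force $|\arg L|<\pi/3$, the relevant numerical fact being $\pi/2<\sqrt{3}$. This is precisely what keeps $\cos(3\phi/2)$ positive, so that the negative term $-\frac12\cos(3\phi/2)$ cannot overwhelm $|L|\cos(\phi/2)$; a looser bound on $\arg L$ would break the sign of the second cosine and the argument would fail.
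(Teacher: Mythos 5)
Your proof is correct, and it takes a genuinely different route from the paper's. The paper treats the function $g(z)=\left(\log\frac{2e}{1-z}\right)^{-1/2}$ as a composition of conformal maps, shows only that its image lies in the sector $\{w\in\D:\ |\arg w|<\pi/4\}$ (using nothing beyond $\Re L>1$, where $L=\log\frac{2e}{1-z}$), and then establishes $\Re\left(w-\tfrac12 w^3\right)>0$ on that sector by checking the sign on the three boundary pieces and invoking the maximum principle. You instead retain \emph{both} pieces of information about $L$ — the half-strip containment $\Re L>1$ \emph{and} $|\Im L|<\pi/2$, the second of which the paper never exploits — and convert them into the argument bound $|\arg L|<\pi/3$ via the numerical fact $\pi/2<\sqrt{3}$. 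That bound is exactly what keeps $|\arg L^{-3/2}|<\pi/2$, so both cosines in the polar expansion
\[
\Re\left[L^{-1/2}-\tfrac12 L^{-3/2}\right]=|L|^{-3/2}\left(|L|\cos\tfrac{\phi}{2}-\tfrac12\cos\tfrac{3\phi}{2}\right),\qquad \phi=\arg L,
\]
are positive, and the positivity of the bracket follows from monotonicity of cosine together with $|L|>1$ — no boundary analysis or maximum principle at all. In effect you prove the sharper containment $g(\D)\subset\{w\in\D:\ |\arg w|<\pi/6\}$, inside which the inequality is elementary, whereas the paper works in the larger $\pi/4$-sector where the term $-\tfrac12\cos\left(3\arg w\right)$ can change sign and a genuinely two-dimensional argument is needed. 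What each buys: your argument is more elementary and quantitative, but leans on the finer geometry of the image of $L$; the paper's sector argument needs only the crude half-plane containment $\Re L>1$ and so would survive perturbations of the function for which the half-strip estimate fails. Both proofs use the same branch conventions as the paper (arguments in $(-\pi/2,\pi/2)$ before taking roots), so your polar formulas for $L^{-1/2}$ and $L^{-3/2}$ are consistent with the quantity the lemma actually concerns.
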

\begin{proof}
Recall that the mapping $\frac{2e}{1-z}$ maps $\D$ conformally onto $\{\Re z>e\}$ and the mapping $\log z$, as chosen above, maps $\{\Re z>e\}$ conformally onto a simply connected subregion of $\{\Re z>1\}$. It is not hard to check that the mapping $z^{-1/2}$ maps $\{\Re z>1\}$ conformally onto a simply connected subregion of the slice $\{z\in\D:\ |\arg z|<\pi/4\}$. It follows that the composition
\[
g(z)=z^{-1/2}\circ\log z\circ \frac{2e}{1-z}=\left(\log\frac{2e}{1-z}\right)^{-1/2}
\]
maps $\D$ conformally onto a simply connected domain $\Omega\subset\{z\in\D:\ |\arg z|<\pi/4\}$. Now, we show that $\Re\left(z-\frac{1}{2}z^3\right)\geq 0$, for $z\in\partial\{z\in\D:\ |\arg z|<\pi/4\}$.

If $z=re^{i\pi/4}$, $r\in[0,1]$, then 
\[
\Re \left(re^{i\pi/4}-\frac{1}{2}r^3e^{3i\pi/4}\right)=\frac{\sqrt{2}}{2}r(1+\frac{r^2}{2})\geq 0.
\]
By symmetry, the same calculation is valid for $z=re^{-i\pi/4}$. If $z=e^{it}$, for $|t|<\pi/4$, then 
\[
\Re\left(e^{it}-\frac{1}{2}e^{3it}\right)=\cos t-\frac{1}{2}\cos (3t)\geq \frac{\sqrt{2}}{2}-\frac{1}{2}\cos 3t>0.
\]
Hence, $\Re\left(z-\frac{1}{2}z^3\right)\geq 0$, for $z\in\partial\{z\in\D:\ |\arg z|<\pi/4\}$ and by the maximum principle, the inequality is strict for $z\in \{z\in\D:\ |\arg z|<\pi/4\}$. Since $\Omega\subset \{z\in\D:\ |\arg z|<\pi/4\}$, we have
$\Re\left(z-\frac{1}{2}z^3\right)>0$ in $\Omega$ and therefore
\[
\Re\left[\left(z-\frac{1}{2}z^3\right)\circ g(z)\right]=\Re\left[g(z)-\frac{1}{2}g(z)^3\right]>0,\ z\in\D.
\]
\end{proof}

We can now prove (1). Upon differentiating $f$, we find
\[
f'(z)=(1-z)^{-2}\left[\left(\log\frac{2e}{1-z}\right)^{-1/2}-\frac{1}{2}\left(\log\frac{2e}{1-z}\right)^{-3/2}\right].
\]
By Lemma \ref{lemap}, if $z\in\D$,
\begin{align*}
\Re (1-z)^2f'(z)&=\Re \frac{zf'(z)}{K(z)}\\
&=\Re \left[\left(\log\frac{2e}{1-z}\right)^{-1/2}-\frac{1}{2}\left(\log\frac{2e}{1-z}\right)^{-3/2}\right]>0,
\end{align*}
where $K(z)=\frac{z}{(1-z)^2}$ is the Koebe function. Using the terminology of Sections 2.2 and 2.3 of \cite{Pom0}, it follows that, since $K$ is starlike, the function $f(z)-f(0)$ is close-to-convex and thus by \cite[Theorem 2.11, p. 51]{Pom0}, $f\in\mathcal{U}$.
\end{proof}

\begin{bibdiv}
\begin{biblist}

\bib{Ba1}{article}{
	title={The size of the set on which a univalent function is large},
	author={A. Baernstein},
	journal={J. d' Anal. Math.}
	volume={70},
	date={1996},
	pages={157-173}
}

\bib{Ba2}{article}{
	title={Univalent functions, Hardy spaces and spaces of Dirichlet type},
	author={A. Baernstein and D. Girela and J.{\'A}. Pel{\'a}ez},
	journal={Illinois J. Math.}
	volume={48},
	date={2004},
	pages={837-859}
}	
	
\bib{Kar}{article}{
	title={Hyperbolic metric and membership of
		conformal maps in the Bergman space},
	author={D. Betsakos and C. Karafyllia and N. Karamanlis},
	journal={Canad. Math. Bull.}
	volume={64(1)},
	date={2021},
	pages={174-181}
}

\bib{Dur}{book}{
	title={Theory of $H^p$ Spaces},
	author={P. Duren},
	date={1970},
	publisher={Academic Press},
	address={New York-London}
}

\bib{DS}{book}{
	title={Bergman Spaces},
	author={P. Duren and A. Schuster},
	date={2004},
	publisher={American Mathematical Society},
	address={Providence, RI}
}

\bib{Ess}{article}{
	title={On analytic functions which are in $H^p$ for some positive $p$},
	author={M. Ess{\' e}n},
	journal={Ark. Mat.},
	volume={19},
	date={1981},
	pages={43--51}
}

\bib{Ess2}{article}{
	title={A value distribution criterion for the class $L\log L$ and some related questions},
	author={M. Ess{\' e}n and D.F. Shea and C.S. Stanton},
	journal={Ann. Inst. Fourier, Grenoble},
	volume={35(4)},
	date={1985},
	pages={127--150}
}

\bib{Gar}{book}{
	title={Harmonic Measure},
	author={J.B. Garnett and D.E. Marshall},
	date={2005},
	publisher={Cambridge University Press},
	address={Cambridge}
}

\bib{Han1}{article}{
	title={Hardy classes and ranges of functions},
	author={L.J. Hansen},
	journal={Michigan Math. J.},
	volume={17},
	date={1970},
	pages={235--248}
}

\bib{HL}{article}{
	title={Some properties of fractional integrals II},
	author={G.H. Hardy and J.E. Littlewood},
	journal={Math. Z.},
	volume={34},
	date={1932},
	pages={403--439}
}

\bib{HKZ}{book}{
	title={Theory of Bergman Spaces},
	author={H. Hedenmalm and B. Korenblum and K. Zhu},
	date={2000},
	publisher={Springer-Verlag},
	address={New York}
}

\bib{Kar1}{article}{
	title={Hyperbolic metric and membership of
conformal maps in the Hardy space},
	author={C. Karafyllia},
	journal={Proc. Amer. Math. Soc.}
	volume={147},
	date={2019},
	pages={3855-3858}
}

\bib{Kar2}{article}{
	title={On a relation between harmonic measure and hyperbolic distance on planar domains},
	author={C. Karafyllia},
	journal={Indiana Univ. Math. J.}
	volume={69},
	date={2020},
	pages={1785-1814}
}

\bib{Kar3}{article}{
	title={On the Hardy number of a domain in terms of harmonic measure and hyperbolic distance},
	author={C. Karafyllia},
	journal={Ark. Mat.},
	volume={58},
	date={2020},
	pages={307--331}

}
\bib{Kim}{article}{
	title={Hardy spaces and unbounded quasidisks},
	author={Y.C. Kim and T. Sugawa},
	journal={Ann. Acad. Sci. Fenn. Math.},
	volume={36},
	date={2011},
	pages={291--300}
}

\bib{Mar}{article}{
	title={The angular distribution of mass by Bergman functions},
	author={D.E. Marshall and W. Smith},
	journal={Rev. Mat. Iberoam.}
	volume={15},
	date={1999},
	pages={93-116}
}

\bib{Per}{article}{
	title={Univalent functions in Hardy, Bergman, Bloch and related spaces},
	author={F. P{\'e}rez-Gonz{\'a}lez and J. R{\"a}tty{\"a}},
	journal={J. d' Anal. Math.}
	volume={105},
	date={2008},
	pages={125-148}
}

\bib{Cor}{article}{
	title={Geometric models, iteration and composition operators},
	author={P. Poggi-Corradini},
	journal={Ph.D. Thesis, University of Washington},
	date={1996}
}

\bib{Cor2}{article}{
	title={The Hardy Class of Geometric Models and the Essential Spectral Radius of Composition Operators},
	author={P. Poggi-Corradini},
	journal={Journal of Functional Analysis}
	volume={143},
	date={1997},
	pages={129-156}
}

\bib{Pom0}{book}{
	title={Univalent functions},
	author={C. Pommerenke},
	date={1975},
	publisher={Vandenhoeck {\&} Ruprecht},
	address={G{\"o}ttingen}
}

\bib{Pom1}{article}{
	title={Schlichte Funktionen und analytische Funktionen von beschr{\"a}nkten Oszillation},
	author={C. Pommerenke},
	journal={Comment. Math. Helv.},
	volume={52},
	date={1977},
	pages={591--602}
}

\bib{Pom2}{book}{
	title={Boundary Behaviour of Conformal Maps},
	author={C. Pommerenke},
	date={1992},
	publisher={Springer-Verlag},
	address={Berlin}
}

\bib{Pra}{article}{
	title={{\"U}ber Mittelwerte analytischer Funktionen},
	author={H. Prawitz},
	journal={Ark. Mat. Astr. Fys.},
	volume={20},
	date={1927},
	pages={1--12}
}

\bib{Smith}{article}{
	title={Composition operators between Bergman and Hardy spaces},
	author={W. Smith},
	journal={Trans. Amer. Math. Soc.}
	volume={348},
	date={1996},
	pages={2331-2348}
}

\bib{Stein}{book}{
	title={Real Analysis: Measure Theory, Integration, and Hilbert Spaces},
	author={E.M. Stein and R. Shakarchi},
	date={2005},
	publisher={Princeton University Press},
	address={Princeton, N.J. and Oxford}
}
\bib{Zhu}{article}{
	title={Translating inequalities between Hardy and Bergman spaces},
	author={K. Zhu},
	journal={Amer. Math. Monthly}
	volume={111},
	date={2004},
	pages={520-525}
}

\end{biblist}
\end{bibdiv}

\end{document}